\theoremstyle{plain}
\newtheorem{thm}{Theorem}[section]  
\newtheorem{lmm}[thm]{Lemma}  
\newtheorem{crl}[thm]{Corollary}
\numberwithin{equation}{section} %% Comment out for sequentially-numbered
\numberwithin{figure}{section} %% Comment out for sequentially-numbered
\newtheorem{prp}[thm]{Proposition}
\theoremstyle{remark}
\newtheorem{rmk}[thm]{Remark}
\newtheorem{xpl}[thm]{Example}
\theoremstyle{definition}
\newtheorem{dfn}[thm]{Definition}
\newcommand{\leb}{\mathrm{Leb} \,}
\newcommand{\interior}{\mathrm{int} \,}
\newcommand{\supp}{\mathrm{supp} \,}
\newcommand{\R}{\mathbb R}
\newcommand{\C}{\mathcal{C}}
\newcommand{\N}{\mathbb N}
\newcommand{\Leb}{\mathcal{L}}
\def\esup{\mathop\mathrm{ess\,sup\,}}
\newcommand{\Prob}{\mathcal P}
\newcommand{\eps}{\varepsilon}
\newcommand{\ytilde}{\tilde{y}}
\newcommand{\Clambda}{C_{\lambda}}
\newcommand{\Ctildelambda}{\tilde{C}_{\lambda}}
\date{\today}
\author{Camilla Brizzi}
\address{Dipartimento di Matematica e Informatica, Universit\`a di Firenze \\ Viale Morgagni 67/a, 50134 Firenze \\ Italy}
\email{camilla.brizzi@unifi.it}
\author{Luigi De Pascale}
\address{Dipartimento di Matematica e Informatica, Universit\`a di Firenze \\ Viale Morgagni 67/a, 50134 Firenze \\ Italy}
\email{luigi.depascale@unifi.it}
\urladdr{http://web.math.unifi.it/users/depascal/}
\author{Anna Kausamo}
\address{Dipartimento di Matematica e Informatica, Universit\`a di Firenze \\ Viale Morgagni 67/a, 50134 Firenze \\ Italy}
\email{akausamo@gmail.com}
\title[$L^\infty$-optimal transport for a class of quasiconvex cost functions]{$L^\infty$-optimal transport for a class of strictly quasiconvex cost functions}
\subjclass[2010]{49J45, 49N15, 49K30}
\begin{document}
\begin{abstract} 
	We consider the $L^\infty$- optimal mass transportation problem 
	\[
	\min_{\Pi(\mu, \nu)} \gamma-\esup  c(x,y),
	\]
	for a new class of costs $c(x,y)$ for which we introduce a tentative notion of twist condition. In particular we study the conditions under which the $\infty$-monotone transport plans are induced by a transportation map. 
	We also state a uniqueness result for  infinitely cyclically monotone Monge minimizers that corresponds to this class of cost functions. We compare the results to previous works. 
\end{abstract}

\maketitle
\section{Introduction}
We consider two Polish spaces  $(X,d_X)$ and  $(Y,d_Y)$.  Let  $c:X\times Y\to\R^+$ be a cost function and $\mu\in \Prob(X), \nu\in \Prob(Y)$; 
here by $\Prob(Z)$ we denote the space of Borel probability measures on a Polish space $Z$. 
Whereas in the classical optimal transport we minimize the integral of the cost function, in the $L^\infty$-optimal transport we consider the problem
\[\min_{\gamma\in\Pi(\mu,\nu)} C_\infty[\gamma]:=\min_{\gamma\in\Pi(\mu,\nu)} \gamma-\esup_{(x,y)\in X\times Y}c(x,y)\tag{P$_\infty$} \]
where 
\begin{equation*}
	\Pi(\mu,\nu):=\left\{\gamma\in \Prob(X \times Y) \, : \, \pi^{1}_{\sharp}\gamma=\mu \ \mbox{and} \ \pi^{2}_{\sharp}\gamma=\nu  \right\}.
\end{equation*}
Above $\pi^{1}$ and $\pi^{2}$ are the projections on the first and the second coordinate of $X\times Y$, respectively.
Minimizers for this problem exist under mild assumptions on $c$ such as lower-semicontinuity. In \cite{ChaDePJuu2008},  Champion, De Pascale, and Juutinen carried out a comprehensive study on the $L^\infty$-transport in the case where the cost of transporting a point $x$ to a point $y$ is given by their distance: $c(x,y)=|x-y|$ on $\R^d$. They studied the problem on compact sets of $\R^d$, $d\ge 1$, and their work was generalized in 2015 by Jylh{\"a} \cite{jylha2015} to Polish spaces with more general costs $c$. Champion, De Pascale, and Juutinen introduced the concept of $\infty$-cyclical monotonicity, which carries the well-known notion of cyclical monotonicity of optimal transportation plans to the $L^\infty$-case.  
In general, optimal $L^\infty$-transportation plans are not as well-behaved as the minimizers of the standard integral Monge-Kantorovich problem. For instance, their restrictions are not always optimal with respect to their marginals. To address this problem, Champion, De Pascale, and Juutinen invoked the concept of \textit{restrictability} which is commonly used in more general $L^\infty$-calculus of variations problems. An optimal $L^\infty$-transportation plan is restrictable if, loosely speaking, its restrictions are also optimal. Moreover, they showed that restrictability and $\infty$-cyclical monotonicity are equivalent. This equivalence also holds for more general cost functions at least if they are continuous, as was proven by Jylh{\"a} in \cite{jylha2015}.

If $X=Y$ and $c=d$, the minimal value $C_\infty(\lambda)$ is  called $\infty$-Wasserstein distance of the measures $\mu$ and $\nu$ and denoted by $W_\infty(\mu,\nu)$. The quantity $W_\infty(\mu,\nu)$ is actually the $p\to\infty$ limit of the $p$-Wasserstein distances $W_p(\mu,\nu)$ of the measures $\mu$ and $\nu$. Also for more general cost functions $c$ we can consider for every $p\ge 1$ the optimal transportation problem
\[ \min_{\gamma\in\Pi(\mu,\nu)}C_p[\gamma]:=\left(\int_{X\times Y} c^{p}(x,y)d\gamma \tag{P$_{p}$}\right)^{\frac{1}{p}}. \]
In the case $c=d$ the $W_\infty$-distance provides a natural control from above for the $W_p$-distances. 
Bouchitt\'e, Jimenez, and Rajesh \cite{BJR2007} and, for even more general costs,  Jylh{\"a} and Rajala \cite{wpwinfty} also established necessary and sufficient conditions for the existence of $W_\infty$-lower bounds for
the integral optimal transportation costs. 

The question on whether there exists a dual formulation for the $L^\infty$-transport, similar to the now-standard Kantorovich duality, remained open until  Barron, Bocea, and Jensen stated and proved a duality theorem in 2017 \cite{BBJ}. The theory was further developed in the $1$-dimensional case by De Pascale and Louet in \cite{dePL}. Unlike in the standard integral optimal transportation, it is not immediate how to use the  $L^ \infty$-duality to prove the existence of deterministic solutions to the Monge-Kantorovich problem. By deterministic solutions we mean minimizers for the problem (P$_\infty$) of the form $\gamma=(Id\times T)_{\sharp}\mu$, where $T$ belongs to the set
\begin{equation*}
	\mathcal{T}(\mu,\nu):=\{ T:X\to Y: T \ \mbox{is a Borel map and } \nu=T_\sharp \mu  \}.
\end{equation*}
This type of solutions are also called Monge solutions, in honor of Gaspard Monge, who is one of the building fathers of the classical theory of optimal transportation. 

In this paper we use a duality-free technique to prove that, under natural assumptions on $\mu$, $\infty$-cyclical monotone transport plans (see Definition \ref{defICM}) --
or, even less, $\infty$-monotone transport plans (these do not need to be optimal) -- are actually induced by a transport map (see Theorem \ref{maintheorem}). 
The technique was introduced by Champion, De Pascale, and Juutinen in \cite{ChaDePJuu2008} where the authors used this property to prove on $\R^d$, in the case where the cost is given by the Euclidean distance, that there exists a deterministic solution for the problem (P$_\infty$)  and, if the target measure $\nu$ is atomic, that restrictable, deterministic solutions are unique. 
Jylh\"{a} in \cite{jylha2015} generalized the existence result to cost functions of the form $c(x,y)=h(y-x)$, where $h:\R^{d}\to \R^{+}$ is strictly quasiconvex in the sense that for all $t\in(0,1)$ and $x,\bar{x}\in \R^{d}$ with $x \neq  \bar{x}$
\[h((1-t)x+t\bar{x})< \max\{h(x),h(\bar{x})\}. \] 
He also extended the uniqueness result of Champion, De Pascale and Juutinen, proving that if $\nu$ contains an atom, say $y_{0}$, and $T$ and $S$ are two optimal transport maps, both corresponding to infinitely cyclically monotone transport plans, then $\mu(T^{-1}(\{y_{0}\})\setminus S^{-1}(\{y_{0}\}))=0$.

The relevant notion in the Monge-Kantorovich problem for integral costs is an invertibility condition on the partial gradient of $c$ \cite{ChaDeP2014,FatFig2010}. This condition is commonly used in dynamical systems, it is called the twist condition and it goes very well along with convexity properties. There is no analogue of the twist condition for $L^\infty$-optimal transport problem. 
In this study we introduce a property which could serve as twist condition in this setting (see Theorem \ref{beforemaintheorem}). And, in addition to giving some examples of costs which have this property, we observe that it is satisfied by some costs studied in previous works. The property we introduce is invariant (as one could expect) by composition of $c$ with a strictly increasing, differentiable function. 

Although we do not change the essence of the proof of uniqueness, we give a new structure to it which, in our opinion, makes the arguments more transparent and may allow a better understanding of the problem. 

The proof of the main results is based on measure theoretic considerations and on the construction of certain specific cones. The same happens in \cite{jylha2015} as well as in previous works in which these techniques are applied. What is different is that the construction of \cite{jylha2015} is based on the translation invariance of the cost while our construction relies on the notion of normal cone to the boundary of a convex set. This points to the possibility of a general construction which may be adapted to different costs.

A different technique, which applies to several costs of the form $c(x,y)=h(x-y)$, to prove the existence of an optimal transport map, was devised by C. Jimenez and F. Santambrogio in \cite{JimSan2012}. In that paper the authors minimize $\int |x-y|^2 d \gamma +\chi (\gamma)$ among the $\gamma \in \Pi (\mu, \nu)$ and 
\[
\chi(\gamma)= \left\lbrace \begin{array}{ll}
	0 & \mbox{if} \ \mbox{supp}(\gamma) \subset \{(x,y) \ | \ c(x,y) \leq M\};\\
	+\infty & \mbox{otherwise}.
\end{array}\right. 
\]
They prove that if $\gamma$ is optimal then it is deterministic. However the optimal map of \cite{JimSan2012} is different from the one we describe here. 

\section{Basic definitions and main theorem}
First we recall some basic results about $L^\infty$-transport, as first established in \cite{ChaDePJuu2008} and then generalized in \cite{jylha2015}. The proofs can be found in these references. 

The first important result is the existence of minimizers for the $L^\infty$-transportation problem. It is a direct consequence of the lower-semicontinuity of the functional $C_\infty$ and the compactness of the set $\Pi(\mu,\nu)$. 
\begin{prp}
	Let $c$ be lower semicontinous. Then the problem ($P_\infty$) has at least one minimizer. 
\end{prp}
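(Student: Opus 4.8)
The plan is to apply the direct method of the calculus of variations: exhibit a topology in which the admissible set $\Pi(\mu,\nu)$ is compact and in which the cost functional $C_\infty$ is sequentially lower semicontinuous, and then extract a minimizer from a minimizing sequence. The natural topology is the narrow (weak) topology on $\Prob(X\times Y)$.

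First I would record that $\Pi(\mu,\nu)$ is nonempty, since it contains the product measure $\mu\otimes\nu$, and that it is narrowly compact. Indeed, as $\mu$ and $\nu$ are Borel probability measures on Polish spaces, the singletons $\{\mu\}$ and $\{\nu\}$ are tight; a standard argument (pairing the compact sets that carry most of the mass of $\mu$ and of $\nu$) then shows that $\Pi(\mu,\nu)$ is a tight family. Moreover the marginal constraints $\pi^1_\sharp\gamma=\mu$ and $\pi^2_\sharp\gamma=\nu$ are preserved under narrow convergence, so $\Pi(\mu,\nu)$ is narrowly closed; by Prokhorov's theorem it is therefore narrowly compact, and since the narrow topology on a tight set is metrizable, sequential compactness is available.

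The crucial step is the narrow lower semicontinuity of $C_\infty$. Here I would use that, $c$ being lower semicontinuous, for every $\lambda\in\R$ the superlevel set $\{c>\lambda\}$ is open, together with the elementary identity $C_\infty[\gamma]\le\lambda\iff\gamma(\{c>\lambda\})=0$, which is just the definition of $\gamma\text{-}\esup$. Given $\gamma_n\to\gamma$ narrowly, set $L\eqdef\liminf_n C_\infty[\gamma_n]$ and pass to a subsequence along which $C_\infty[\gamma_n]\to L$. For any fixed $\lambda>L$ one has $C_\infty[\gamma_n]\le\lambda$, hence $\gamma_n(\{c>\lambda\})=0$, for $n$ large; applying the portmanteau theorem to the open set $\{c>\lambda\}$ gives $\gamma(\{c>\lambda\})\le\liminf_n\gamma_n(\{c>\lambda\})=0$, so $C_\infty[\gamma]\le\lambda$. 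Letting $\lambda\downarrow L$ yields $C_\infty[\gamma]\le L$, which is the desired inequality.

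Finally I would close the argument by the direct method: take a minimizing sequence $\gamma_n\in\Pi(\mu,\nu)$ for (P$_\infty$); by compactness a subsequence converges narrowly to some $\gamma\in\Pi(\mu,\nu)$, and by the lower semicontinuity just established $C_\infty[\gamma]\le\liminf_n C_\infty[\gamma_n]=\inf_{\Pi(\mu,\nu)}C_\infty$, so $\gamma$ is a minimizer. The only genuinely delicate point is the lower semicontinuity of the essential–supremum functional: because $C_\infty$ is not of integral type, it has to be handled through the open/closed sublevel structure coming from the lower semicontinuity of $c$ and the portmanteau theorem, rather than by any integral estimate such as Fatou's lemma; everything else is routine measure theory.
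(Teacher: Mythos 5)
Your proof is correct and follows exactly the route the paper indicates (the direct method: narrow compactness of $\Pi(\mu,\nu)$ via tightness and Prokhorov, plus narrow lower semicontinuity of $C_\infty$ obtained from the open superlevel sets $\{c>\lambda\}$ and the portmanteau theorem), which is the argument of the cited references \cite{ChaDePJuu2008,jylha2015}. The paper itself gives no further detail, so there is nothing to add.
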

In general, there can be a high level of non-uniqueness of minimizers of the functional $C_\infty$: we are only optimizing the ``worst case'' and more locally the situation can be far from optimal. It is often useful to consider a subset of better-behaving minimizers, the so-called \emph{$\infty-c$-cyclically monotone } (ICM) transport plans. The idea is that on the support of such a plan, there is no rearrangement of destinations of a fixed finite set of points that would improve the highest cost coupling of that set. To our purpose it is also useful to have a definition for a plan that can not be improved by the interchange of any two destinations of a given pair of initial points. The formal definitions are the following: 
\begin{dfn}\label{defICM}
	A set $\Gamma\subset X\times Y$ is  $\infty-c$-cyclically monotone (ICM)  if for every finite set of points $\{(x_i,y_i)\}_{i=1}^k\subset \Gamma$ and for every permutation $\sigma$ of the set $\{1,\ldots,k\}$ we have 
	\[\max_{1\le i\le k}c(x_i,y_i)\le \max_{1\le i\le k}c(x_i,y_{\sigma(i)})\,.\]
	We say that a transport plan $\gamma\in \Pi(\mu,\nu)$ is ICM, if it is concentrated on an ICM  set.
\end{dfn}
\begin{dfn}\label{defIM}
	The set  $\Gamma\subset X\times Y$ is $\infty$-monotone (IM) if for any $(x,y),(x',y')\in\Gamma$ we have 
	\[\max\{c(x,y),c(x',y')\}\le \max\{c(x,y'),c(x',y)\}\,.\]
	We say that a transport plan $\gamma\in \Pi(\mu,\nu)$ is IM, if it is concentrated on an IM set. 
\end{dfn}

\begin{rmk}
	Every ICM plan is IM but the reverse is not true in general. Consider, for example, the marginal measures $\mu=\nu$ to be the uniform probability measure on the circle $\mathbb{S}^1$, the cost function $c(x,y)=|x-y|$ (the Euclidean distance), and the set $\Gamma=Graph(R_\theta)$, where $R_\theta$ is the rotation of a vector $v\in \mathbb{S}^1$ by an angle $\theta< \pi / 2 $. Now $\Gamma$ is IM, but since the transport plan supported by $\Gamma$ is not optimal (the optimal cost being $0$, given by the identity map) the set $\Gamma$ cannot be ICM. 
\end{rmk}

$\infty-c$-cyclically monotone transport plans are much better-behaved than arbitrary solutions to the problem ($P_\infty$). For example, they are restrictable in the sense of the following definition.
\begin{dfn}
	A transport plan $\gamma\in\Pi(\mu,\nu)$ is said to be a restrictable minimizer of the problem (P$_\infty$) if it satisfies the following condition: for any $\gamma'\le\gamma$, i.e. $\gamma'(B)\le \gamma(B)$ for every Borel sets $B$, and $\gamma'(X\times Y)>0$ we have
	\begin{equation*}
		C_\infty(\bar{\gamma})=\min\left\{C_{\infty}(\lambda):\lambda\in\Pi(\pi^{1}_\sharp\bar{\gamma},\pi^{2}_\sharp\bar{\gamma})  \right\},
	\end{equation*}
	where $\bar{\gamma}=\frac{\gamma'}{\gamma'(X\times Y)}$.
\end{dfn}
They  exist under relatively mild conditions as the following lemma, taken from \cite{jylha2015}, states. 
\begin{lmm}
	Let $c$ be lower semicontinuous. Let $\gamma_p$ be a solution of
	the problem ($P_p$), $p\geq 1$. Then weak$^\ast$ cluster points
	of $(\gamma_p)_{p\geq 1}$ exist and each of them is a solution of
	the problem ($P_\infty$). If, in addition, $c$ is continuous and
	the minimum of each problem ($P_p$) is finite, then the
	weak$^\ast$ cluster points are ICM (here weak$^\ast$ convergence is tested against continuous and bounded functions).
\end{lmm}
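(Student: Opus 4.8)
The plan is to derive everything from the compactness of $\Pi(\mu,\nu)$ together with the elementary comparison between the $L^p$- and the $L^\infty$-norms of $c$ against a probability measure. First I would settle existence of cluster points: since $\mu$ and $\nu$ are Borel probability measures on Polish spaces they are tight (Ulam), hence $\Pi(\mu,\nu)$ is tight and, being weak$^\ast$ closed, weak$^\ast$ sequentially compact by Prokhorov's theorem. Thus from any sequence $p_n\to\infty$ one can extract a subsequence, not relabelled, with $\gamma_{p_n}\rightharpoonup\gamma_\infty$ for some $\gamma_\infty\in\Pi(\mu,\nu)$.

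Next, optimality of $\gamma_\infty$ for $(P_\infty)$. Let $\gamma^\ast$ be a minimizer of $(P_\infty)$, which exists by the Proposition above; if $C_\infty[\gamma^\ast]=+\infty$ there is nothing to prove, so assume it finite. For $1\le q\le p$, Jensen's inequality with respect to the probability measure $\gamma_p$ gives $C_q[\gamma_p]\le C_p[\gamma_p]$, while optimality of $\gamma_p$ for $(P_p)$ together with $C_p[\gamma^\ast]\le C_\infty[\gamma^\ast]$ gives $C_p[\gamma_p]\le C_p[\gamma^\ast]\le C_\infty[\gamma^\ast]$. Hence, for each fixed $q$, $\limsup_{p\to\infty}C_q[\gamma_p]\le C_\infty[\gamma^\ast]$. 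Since $c$ is nonnegative and lower semicontinuous, so is $c^q$; therefore $c^q$ is a pointwise increasing limit of bounded continuous functions and $\gamma\mapsto C_q[\gamma]$ is lower semicontinuous for weak$^\ast$ convergence tested against $C_b$. Passing to the limit along the subsequence yields $C_q[\gamma_\infty]\le C_\infty[\gamma^\ast]$, and letting $q\to\infty$, using that $C_q[\gamma_\infty]$ increases to $C_\infty[\gamma_\infty]$, we get $C_\infty[\gamma_\infty]\le C_\infty[\gamma^\ast]$, i.e. $\gamma_\infty$ solves $(P_\infty)$.

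For the ICM property, assume in addition that $c$ is continuous and that each $(P_p)$ has finite minimal value; the latter guarantees that $\gamma_p$ is concentrated on a $c^p$-cyclically monotone set, so for all $(x_1,y_1),\dots,(x_k,y_k)\in\supp\gamma_p$ and every permutation $\sigma$ one has $\sum_i c(x_i,y_i)^p\le\sum_i c(x_i,y_{\sigma(i)})^p$. Fix now $(x_1,y_1),\dots,(x_k,y_k)\in\supp\gamma_\infty$. For each $i$, weak$^\ast$ convergence and the portmanteau theorem give $\liminf_n\gamma_{p_n}(B_r(x_i,y_i))\ge\gamma_\infty(B_r(x_i,y_i))>0$ for every $r>0$, so one may choose $(x_i^n,y_i^n)\in\supp\gamma_{p_n}$ with $(x_i^n,y_i^n)\to(x_i,y_i)$. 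Applying the $c^{p_n}$-cyclical monotonicity inequality to these points, taking $p_n$-th roots, and using the elementary two-sided bound $\max_i a_i\le\bigl(\sum_{i=1}^k a_i^{p}\bigr)^{1/p}\le k^{1/p}\max_i a_i$ for $a_i\ge0$, one gets $\max_i c(x_i^n,y_i^n)\le k^{1/p_n}\max_i c(x_i^n,y_{\sigma(i)}^n)$. Letting $n\to\infty$, using continuity of $c$ and $k^{1/p_n}\to1$, this becomes $\max_i c(x_i,y_i)\le\max_i c(x_i,y_{\sigma(i)})$, so $\supp\gamma_\infty$ is ICM and $\gamma_\infty$ is an ICM plan.

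I expect the main obstacle to be purely technical: bookkeeping the two intertwined limits in the last step (the exponents $p_n\to\infty$ and the spatial approximation of the support points), and making the lower semicontinuity of $C_q$ rigorous when $c$ is only lower semicontinuous and possibly unbounded — both handled by the standard truncation/approximation by bounded continuous functions. The finiteness hypothesis on the problems $(P_p)$ is used precisely, and only, to ensure cyclical monotonicity of the supports of the $\gamma_p$.
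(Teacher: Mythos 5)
Your proof is correct and follows essentially the same route as the reference the paper cites for this lemma (\cite{jylha2015}, the paper itself does not reproduce the proof): Prokhorov compactness for existence of cluster points, the chain $C_q[\gamma_p]\le C_p[\gamma_p]\le C_p[\gamma^\ast]\le C_\infty[\gamma^\ast]$ plus weak$^\ast$ lower semicontinuity of $C_q$ and the monotone limit $C_q\uparrow C_\infty$ for optimality, and passage to the limit in the $c^{p_n}$-cyclical monotonicity with the $k^{1/p_n}$ factor for the ICM property. The only step worth stating explicitly is that, since $c$ is continuous, concentration of $\gamma_{p_n}$ on a $c^{p_n}$-cyclically monotone set upgrades to $c^{p_n}$-cyclical monotonicity of all of $\supp \gamma_{p_n}$ (the support is contained in the closure of the carrier, and the property passes to closures for continuous $c$), which is exactly what your approximation of points of $\supp \gamma_\infty$ by points of $\supp \gamma_{p_n}$ uses.
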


Since in this paper we are interested in the IM plans, namely in proving that they are of the Monge-type, it is useful to state their existence:
\begin{crl}\label{optimalplansareIM}
	If $c$ is continuous and the minimum of the problem (P$_p$) is finite for every $p\ge1$, then the problem (P$_\infty$)  has a solution $\gamma$ that is IM.
\end{crl}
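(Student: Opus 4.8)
The plan is to read this off directly from the preceding lemma, together with the elementary observation (recorded in the Remark) that every ICM set is IM. First, since $c$ is continuous and each problem $(P_p)$ has a finite minimum, for every $p\ge 1$ we may select a minimizer $\gamma_p$ of $(P_p)$; such minimizers exist by the direct method, using lower semicontinuity of $\gamma\mapsto C_p[\gamma]$ and the weak$^\ast$ compactness of $\Pi(\mu,\nu)$ (which holds on Polish spaces, $\mu$ and $\nu$ being automatically tight, by Prokhorov's theorem). The family $\{\gamma_p\}_{p\ge 1}\subset\Pi(\mu,\nu)$ therefore admits a weak$^\ast$ cluster point $\gamma$. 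By the preceding lemma, $\gamma$ is a solution of $(P_\infty)$, and since $c$ is continuous and the minima of the $(P_p)$ are finite, $\gamma$ is ICM; in particular it is concentrated on an ICM set $\Gamma$.

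Second, I would verify that $\Gamma$ is automatically IM, so that $\gamma$ is the sought-after IM solution. This is the content of the Remark, but concretely it follows by specializing Definition \ref{defICM} to the case $k=2$: given any two points $(x,y),(x',y')\in\Gamma$ and the transposition $\sigma=(1\,2)$ of $\{1,2\}$, the ICM inequality reads $\max\{c(x,y),c(x',y')\}\le\max\{c(x,y'),c(x',y)\}$, which is exactly the defining inequality of an IM set in Definition \ref{defIM}. Hence $\Gamma$ is IM, $\gamma$ is IM, and $\gamma$ solves $(P_\infty)$, which proves the corollary.

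I do not expect any genuine obstacle here: the statement is essentially a repackaging of the cited lemma of Jylh\"a, and the only points requiring (routine) care are the existence of the finite-$p$ minimizers $\gamma_p$ and the weak$^\ast$ compactness of $\Pi(\mu,\nu)$ under the stated hypotheses, both of which are standard.
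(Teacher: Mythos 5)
Your argument is correct and is exactly the route the paper intends: the corollary is an immediate consequence of the preceding lemma (a weak$^\ast$ cluster point of minimizers $\gamma_p$ of (P$_p$) solves (P$_\infty$) and is ICM) combined with the observation that ICM implies IM by taking $k=2$ and a transposition in Definition \ref{defICM}. Your extra verification of the existence of the minimizers $\gamma_p$ and of the weak$^\ast$ compactness of $\Pi(\mu,\nu)$ is routine and consistent with what the paper leaves implicit.
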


In the following we state some definitions and results which are necessary for the proof of Theorem \ref{beforemaintheorem}. They can also be found in \cite{ChaDePJuu2008}. 
\begin{dfn}
	Let $y\in\R^{d}$, $ r>0$ and let $\gamma\in\Pi(\mu,\nu)$ be a transport plan. We define
	\begin{equation*}
		\gamma^{-1}(B(y,r)):=\pi^{1}\left((\R^{d}\times B(y,r))\cap \supp \gamma\right)
	\end{equation*}
	In other words, $\gamma^{-1}(B(y,r))$ is the set of points whose mass is partially or completely
	transported to $B(y, r)$ by $\gamma$, and $\gamma^{-1}$ corresponds to the inverse of the multimap induced by $\supp \gamma$. Notice also that $\gamma^{-1}(B(y, r))$ is a Borel set.
\end{dfn}
Since this notion is important in the sequel, we recall that when $U$ is a Borel set,
one has
\begin{equation*}
	\lim_{r\to 0^+}\frac{\Leb^{d}\left(U\cap B(x,r)\right)}{\Leb^{d}(B(x,r))}=1,
\end{equation*}
for almost every $x$ in $U$: we shall call such a point $x$ a Lebesgue point of $U$ and we will denote by $\leb (U)$ the Borel set of Lebesgue points. 
\begin{dfn}
	We say that the couple $(x, y)\in \R^{d}\times \R^{d}$ is a $\gamma$-regular point if $x\in\leb\left(\gamma^{-1}(B(y,r))\right)$ for any positive $r$. We denote \begin{equation}\label{regpoints}
		R(\gamma):=\{(x,y)\in \R^{d}\times \R^{d}: (x,y) \ \mbox{is a } \gamma\mbox{-regular point}  \}.
	\end{equation} 
\end{dfn}
\begin{rmk}\label{fullmeasure}
	Notice first that by the closedness of $\supp \gamma$ we have that $R(\gamma)\subset \supp \gamma$. Moreover it is not difficult to show that $R(\gamma)$ is a Borel set. Indeed, if for every fixed $n\in \N$ we consider a countable covering  $\R^{d}\subset\bigcup_{i\in \N}B\left(y_{i,n},2^{-n}\right)$ of balls of radius $2^{-n}$, then
	\begin{equation}\label{Risborel}
		R(\gamma)=\bigcap_{n\ge1}\bigcup_{i\in \N}\left(\leb\left(\gamma^{-1}\left(B\left(y_{i,n},2^{-n}\right)\right)\right)\times B\left(y_{i,n},2^{-n}\right) \right)
	\end{equation}
\end{rmk}

\begin{lmm}\label{gammaconcentrated}
	Let $\gamma\in\Pi(\mu,\nu)$ and assume that $\mu <<\Leb ^{d}$. Then $\gamma$ vanishes outside the set $R(\gamma)$ of $\gamma$-regular points. 
\end{lmm}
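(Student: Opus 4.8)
The plan is to work directly from the representation \eqref{Risborel} of $R(\gamma)$ recorded in Remark \ref{fullmeasure}. That formula writes $R(\gamma)$ as a countable intersection over the scales $2^{-n}$ of the sets $\bigcup_{i\in\N}\bigl(\leb(\gamma^{-1}(B_{i,n}))\times B_{i,n}\bigr)$, where $B_{i,n}=B(y_{i,n},2^{-n})$ and, for each fixed $n$, the balls $\{B_{i,n}\}_{i\in\N}$ cover $\R^d$. Passing to complements, $(\R^d\times\R^d)\setminus R(\gamma)$ is the countable union over $n$ of the Borel sets $A_n:=(\R^d\times\R^d)\setminus\bigcup_{i\in\N}\bigl(\leb(\gamma^{-1}(B_{i,n}))\times B_{i,n}\bigr)$, so by countable subadditivity of $\gamma$ it is enough to prove $\gamma(A_n)=0$ for each fixed $n$.

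Fix $n$ and abbreviate $B_i=B_{i,n}$ and $L_i=\leb(\gamma^{-1}(B_i))$. The first step is the purely set-theoretic inclusion
\[
A_n\ \subseteq\ \bigcup_{i\in\N}\bigl((\R^d\setminus L_i)\times B_i\bigr),
\]
valid because the $B_i$ cover $\R^d$: given $(x,y)\in A_n$, pick $i_0$ with $y\in B_{i_0}$; since $(x,y)\notin L_{i_0}\times B_{i_0}$ while $y\in B_{i_0}$, necessarily $x\notin L_{i_0}$, i.e. $(x,y)\in(\R^d\setminus L_{i_0})\times B_{i_0}$. By subadditivity once more, it then suffices to show $\gamma\bigl((\R^d\setminus L_i)\times B_i\bigr)=0$ for every $i$.

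For this last step I would use that $\gamma$ is concentrated on the closed set $\supp\gamma$. If $(x,y)\in\bigl((\R^d\setminus L_i)\times B_i\bigr)\cap\supp\gamma$, then $y\in B_i$ and $(x,y)\in\supp\gamma$, so by the very definition of $\gamma^{-1}(B_i)=\pi^1\bigl((\R^d\times B_i)\cap\supp\gamma\bigr)$ we get $x\in\gamma^{-1}(B_i)$; as also $x\notin L_i=\leb(\gamma^{-1}(B_i))$, this shows
\[
\pi^1\Bigl(\bigl((\R^d\setminus L_i)\times B_i\bigr)\cap\supp\gamma\Bigr)\ \subseteq\ \gamma^{-1}(B_i)\setminus\leb\bigl(\gamma^{-1}(B_i)\bigr).
\]
The right-hand side is Lebesgue-null, since $U\setminus\leb(U)$ has zero Lebesgue measure for every Borel $U$ (this is exactly the density statement recalled before the definition of $\gamma$-regular point), and hence it is $\mu$-null because $\mu\ll\Leb^d$. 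Since $\pi^1_\sharp\gamma=\mu$, the $\gamma$-mass of any Borel set is at most the $\mu$-mass of its first projection; combined with the fact that $\gamma$ ignores the complement of $\supp\gamma$, this gives $\gamma\bigl((\R^d\setminus L_i)\times B_i\bigr)\le\mu\bigl(\gamma^{-1}(B_i)\setminus\leb(\gamma^{-1}(B_i))\bigr)=0$. Summing over $i$ yields $\gamma(A_n)=0$, and summing over $n$ completes the proof.

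I do not expect a genuine obstacle here: the argument is essentially bookkeeping resting on two standard inputs, the Lebesgue density theorem and the absolute continuity of $\mu$. The one place that needs care is the passage from $\gamma$-mass to $\mu$-mass, which must be routed through $\supp\gamma$ so that the membership $x\in\gamma^{-1}(B_i)$ is available from the definition; without restricting to the support, that identification — and hence the whole estimate — would break down.
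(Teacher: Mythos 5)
Your proof is correct and is essentially the paper's own argument: both decompose the complement of $R(\gamma)$ via the countable coverings by balls $B(y_{i,n},2^{-n})$, observe that the relevant first projections lie in $\gamma^{-1}(B_{i,n})\setminus\leb\bigl(\gamma^{-1}(B_{i,n})\bigr)$, which is Lebesgue-null by the density theorem and hence $\mu$-null since $\mu<<\Leb^d$, and conclude through the marginal condition $\pi^1_\sharp\gamma=\mu$ after restricting to $\supp\gamma$. The only difference is bookkeeping (you estimate ball by ball, the paper bounds $\gamma(S)\le\mu(\pi^1(S))$ in one stroke), so no further comment is needed.
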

\begin{proof}
	Let us denote by $S:=\supp\gamma\setminus R(\gamma)$ the complement of the $\gamma$-regular points, intersected with the support of $\gamma$, {\it i.e.}
	\[S:=\{(x,y)\in \supp \gamma \, : \, x\not\in\leb\left(\gamma^{-1}\left(B(y,r)\right)\right)\ \mbox{for some positive}\ r \}. \]
	One can prove that $S$ is, actually, equal to the set
	\[\bigcup_{n\ge 1}\bigcup_{i\in \N}\left(\left( \gamma^{-1}\left(B(y_{i,n},2^{-n})\right) \setminus \leb \left(\gamma^{-1}\left(B(y_{i,n},2^{-n})\right)\right)  \right)\times B(y_{i,n},2^{-n})\right)\cap\supp\gamma.\]
	In particular we have that
	\[\pi^{1}(S)=\bigcup_{n\ge 1}\bigcup_{i\in \N}\left( \gamma^{-1}\left(B(y_{i,n},\frac{1}{2^n})\right) \setminus \leb \left(\gamma^{-1}\left(B(y_{i,n},\frac{1}{2^n})\right)\right)  \right), \]
	and the set on the right-hand side has Lebesgue measure $0$. Therefore, by absolute continuity of $\mu$ with respect to $\mathcal{L}^d$ we have $\mu (\pi^{1}(S))=0$. Finally 
	\[\gamma(S)\le\gamma (\pi^{1}(S)\times \R^{d})=\mu (\pi^{1}(S))=0. \]
\end{proof}
\noindent\textbf{Notation.} At this point, it is natural to introduce a more refined definition 
\begin{equation*}
	\gamma_* ^{-1}(B(y,r)):=\pi^{1}\left((\R^{d}\times B(y,r))\cap R(\gamma)\right).
\end{equation*}
For future use we also introduce a suitable notation for a cone: let $x_{0},\xi\in\R^{d}$ such that $|\xi|=1$ and $\delta\in[0,2]$ then 
\begin{equation*}\label{cone}
	K(x_{0},\xi,\delta):=\left\{x\in \R^{d}\setminus \{x_0\} \, : \, \frac{x-x_{0}}{|x-x_{0}|}\cdot\xi\ge 1-\delta   \right\} \cup \{x_0\}, 
\end{equation*}
and for a ``truncated'' cone
\begin{equation}\label{compactcone}
	K(x_{0},\xi,\delta,s):=K(x_{0},\xi,\delta) \cap B(x_0,s).
\end{equation}
%ANNA1
\begin{prp}\label{densitypointcapcone}
	Let $\mu <<\Leb ^{d}$, let $(x_{0},y_{0})\in R(\gamma)$ and let $r>0$, then for every $\xi \in \partial B(0,1)$, $\delta\in(0,2]$ and $s>0$ it holds:
	\begin{equation*}
		\Leb^{d}\left(\gamma_*^{-1}(B(y_{0},r))\cap K(x_{0},\xi,\delta,s)\right)   >0.
	\end{equation*}
	In general,  if $A$ is a Borel set and $x_0 \in \leb (A)$ it  holds: 
	\[
	\Leb^{d}\left(A \cap \gamma_*^{-1}(B(y_{0},r))\cap K(x_{0},\xi,\delta,s)\right)   >0.
	\]
\end{prp}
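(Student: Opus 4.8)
The plan is to reduce the assertion to an elementary density computation, after noting two things. First, since $\R^{d}$ has density $1$ at every point, the first displayed inequality is the particular case $A=\R^{d}$ of the general one, so it suffices to prove the latter. Second — and this is the key observation — the refined preimage $\gamma_*^{-1}(B(y_0,r))$ agrees with the cruder preimage $\gamma^{-1}(B(y_0,r))$ up to a Lebesgue-null set. To see this I would put $S:=\supp\gamma\setminus R(\gamma)$, exactly as in the proof of Lemma~\ref{gammaconcentrated}, where it is shown that $\Leb^{d}(\pi^1(S))=0$, and check the inclusion
\[
\gamma^{-1}(B(y_0,r))\setminus\pi^1(S)\ \subseteq\ \gamma_*^{-1}(B(y_0,r)):
\]
if $x$ lies in the left-hand side, choose $y'\in B(y_0,r)$ with $(x,y')\in\supp\gamma$; since $x\notin\pi^1(S)$ we must have $(x,y')\in R(\gamma)$, whence $x\in\gamma_*^{-1}(B(y_0,r))$. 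Consequently, for every Lebesgue-measurable set $E$,
\[
\Leb^{d}\bigl(\gamma_*^{-1}(B(y_0,r))\cap E\bigr)=\Leb^{d}\bigl(\gamma^{-1}(B(y_0,r))\cap E\bigr),
\]
so it only remains to bound $\Leb^{d}\bigl(\gamma^{-1}(B(y_0,r))\cap A\cap K(x_0,\xi,\delta,s)\cap B(x_0,\rho)\bigr)$ from below by a positive quantity for some small $\rho$.

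For this lower bound I would argue by density at $x_0$. Because $(x_0,y_0)\in R(\gamma)$, the definition of $\gamma$-regular point gives $x_0\in\leb\bigl(\gamma^{-1}(B(y_0,r))\bigr)$, and by hypothesis $x_0\in\leb(A)$; hence both $\Leb^{d}\bigl(B(x_0,\rho)\setminus A\bigr)$ and $\Leb^{d}\bigl(B(x_0,\rho)\setminus\gamma^{-1}(B(y_0,r))\bigr)$ are $o(\rho^{d})$ as $\rho\to0^+$. On the other hand, for $0<\rho\le s$ one has $K(x_0,\xi,\delta,s)\cap B(x_0,\rho)=K(x_0,\xi,\delta)\cap B(x_0,\rho)$, and since $K(x_0,\xi,\delta)$ is a cone with vertex $x_0$, a polar-coordinate computation gives
\[
\Leb^{d}\bigl(K(x_0,\xi,\delta,s)\cap B(x_0,\rho)\bigr)=\theta\,\omega_d\,\rho^{d},\qquad 0<\rho\le s,
\]
where $\omega_d:=\Leb^{d}(B(0,1))$ and $\theta$ is the normalized surface measure of the cap $\{\eta\in\partial B(0,1):\eta\cdot\xi\ge1-\delta\}$, which is strictly positive since $\delta>0$. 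Subtracting the two $o(\rho^{d})$ terms, for every sufficiently small $\rho\in(0,s]$ the set $\gamma^{-1}(B(y_0,r))\cap A\cap K(x_0,\xi,\delta,s)\cap B(x_0,\rho)$ has measure at least $\tfrac{1}{2}\theta\,\omega_d\,\rho^{d}>0$; combined with the identity of the previous paragraph this proves the proposition.

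I do not expect a real obstacle here: the proof is bookkeeping built on the definition of regular point, the null-set estimate already carried out in Lemma~\ref{gammaconcentrated}, and a trivial solid-angle computation. The only points deserving a little care are the inclusion $\gamma^{-1}(B(y_0,r))\setminus\pi^1(S)\subseteq\gamma_*^{-1}(B(y_0,r))$ displayed above and the measurability bookkeeping: $\gamma^{-1}(B(y_0,r))$ is Borel, while $\gamma_*^{-1}(B(y_0,r))$ is the projection of the Borel set $R(\gamma)\cap(\R^{d}\times B(y_0,r))$, hence analytic and therefore Lebesgue measurable, so all the displayed (in)equalities are meaningful.
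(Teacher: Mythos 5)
Your argument is correct, and it reaches the conclusion by a genuinely different mechanism than the paper's. The paper argues through the measure $\gamma$ itself: from $x_0\in\leb\left(\gamma^{-1}(B(y_0,r))\right)$ it extracts a point $(x,y)\in\left(K(x_0,\xi,\delta,s)\times B(y_0,r)\right)\cap\supp\gamma$ with $x\neq x_0$, surrounds it by a product ball $B(x,\rho)\times B(y,\rho)$ of positive $\gamma$-measure contained in $K(x_0,\xi,\delta,s)\times B(y_0,r)$, intersects with $R(\gamma)$ at no cost of mass (Lemma \ref{gammaconcentrated}), projects to obtain $\mu\left(K(x_0,\xi,\delta,s)\cap\gamma_*^{-1}(B(y_0,r))\right)>0$, and only then invokes $\mu\ll\Leb^d$. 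You instead show that $\gamma_*^{-1}(B(y_0,r))$ and $\gamma^{-1}(B(y_0,r))$ differ by a subset of the Lebesgue-null set $\pi^1(S)$ constructed in the proof of Lemma \ref{gammaconcentrated} --- your inclusion $\gamma^{-1}(B(y_0,r))\setminus\pi^1(S)\subseteq\gamma_*^{-1}(B(y_0,r))$ is correct, since $(x,y')\in\supp\gamma$ and $x\notin\pi^1(S)$ force $(x,y')\in R(\gamma)$ --- and then run a pure density and solid-angle estimate at $x_0$, which is sound because the cap $\{\eta\in\partial B(0,1):\eta\cdot\xi\ge 1-\delta\}$ has positive surface measure for every $\delta>0$. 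Your measurability remark (the refined preimage is a projection of a Borel set, hence analytic and Lebesgue measurable) addresses a point the paper leaves implicit. Two small dividends of your route: it handles the refinement with the Borel set $A$ on exactly the same footing as the basic claim, whereas the paper's written proof only carries out the case without $A$; and it never actually uses $\mu\ll\Leb^d$, since $\Leb^d(\pi^1(S))=0$ comes from the Lebesgue density theorem alone. The paper's route, on the other hand, yields along the way the positivity of the $\mu$-measure (not merely the $\Leb^d$-measure) of the relevant set, which is occasionally the more natural intermediate statement.
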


\begin{proof}
	By definition of $R(\gamma)$, $x_0$ is a Lebesgue point of $\gamma^{-1}(B(y_0,r))$ which implies $\Leb^d\left(\gamma^{-1}(B(y_0,r))\cap K(x_0,\xi, \delta,s)\right)>0$ for every $\xi\in \partial B(0,1)$, $\delta\in (0,2]$, $s>0$. Thus there exists \[(x,y)\in (K(x_0,\xi, \delta,s)\times B(y_0,r))\cap \supp\gamma\] such that $x\neq x_0$. We take $\rho>0$ small enough so that $B(x,\rho)\subset K(x_0,\xi,\delta,s)$ and $B(y,\rho)\subset B(y_0,r)$. We know that $\gamma(B(x,\rho)\times B(y,\rho))>0$ because $(x,y)\in\supp \gamma$, and since $\gamma$ is concentrated in $R(\gamma)$ we have:
	\begin{align*}
		0&<\gamma((B(x,\rho)\times B(y,\rho))\cap R(\gamma))\\
		&\le\gamma((K(x_0,\xi,\delta,s)\times B(y_0,r))\cap R(\gamma))\\
		&\le\gamma(K(x_0,\xi,\delta,s)\cap\gamma_*^{-1}(B(y_0,r))\times\R^d)\\
		&=\mu(K(x_0,\xi,\delta,s)\cap\gamma_*^{-1}(B(y_0,r)).
	\end{align*}
	Which in turn implies $\Leb^d(K(x_0,\xi,\delta,s)\cap\gamma_*^{-1}(B(y_0,r))>0$ by the fact that $\mu<<\Leb^d$.
\end{proof}
\begin{crl}\label{positivemeasure}
	Let $(x_{0},y_{0})\in R(\gamma)$ and let $C$ be a convex set with $\interior C \neq \emptyset $ such that $x_{0}\in C$. Then for every $r>0$ 
	\begin{equation*}
		\Leb^{d}(\gamma_*^{-1}(B(y_{0},r)\cap \interior C ))>0.
	\end{equation*}
	Moreover, if $A$ is a Borel set and $x_0\in\leb(A)$
	\begin{equation*}
		\Leb^{d}(A\cap \gamma_*^{-1}(B(y_{0},r)\cap \interior C ))>0.
	\end{equation*}
\end{crl}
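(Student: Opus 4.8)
The plan is to deduce Corollary \ref{positivemeasure} from Proposition \ref{densitypointcapcone} by inscribing in $\interior C$ a truncated cone with vertex at $x_0$. The key geometric observation is the following: \emph{if $C\subset\R^d$ is convex with $\interior C\neq\emptyset$ and $x_0\in C$, then there exist $\xi\in\partial B(0,1)$, $\delta\in(0,2]$ and $s>0$ such that $K(x_0,\xi,\delta,s)\setminus\{x_0\}\subset\interior C$.}

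To prove this observation I would pick $z\in\interior C$ with $z\neq x_0$ (possible since $\interior C$ is open and nonempty), and fix $\rho>0$ with $B(z,\rho)\subset\interior C$. By the standard fact that a segment joining a point of a convex set to an interior point (the endpoint lying in $C$ possibly excepted) lies in the interior, for every $w\in B(z,\rho)$ and every $t\in(0,1]$ the point $(1-t)x_0+tw$ belongs to $\interior C$. Hence the ``ice-cream cone'' $\bigcup_{t\in(0,1]}\big((1-t)x_0+tB(z,\rho)\big)$ is contained in $\interior C$. Setting $\xi=(z-x_0)/|z-x_0|$ and choosing $\delta\in(0,2]$ and $s>0$ small enough, the truncated cone $K(x_0,\xi,\delta,s)$ lies, apart from its vertex, inside this ice-cream cone, and therefore inside $\interior C$.

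Granting the observation, the corollary is immediate. Since $(x_0,y_0)\in R(\gamma)$, Proposition \ref{densitypointcapcone} applied with the parameters $\xi,\delta,s$ above gives $\Leb^d\big(\gamma_*^{-1}(B(y_0,r))\cap K(x_0,\xi,\delta,s)\big)>0$; as $K(x_0,\xi,\delta,s)\setminus\{x_0\}\subset\interior C$ and $\{x_0\}$ is Lebesgue-null, this yields $\Leb^d\big(\gamma_*^{-1}(B(y_0,r))\cap\interior C\big)>0$. For the ``moreover'' part, if $A$ is Borel with $x_0\in\leb(A)$, the second assertion of Proposition \ref{densitypointcapcone} gives $\Leb^d\big(A\cap\gamma_*^{-1}(B(y_0,r))\cap K(x_0,\xi,\delta,s)\big)>0$, and intersecting with $\interior C$ as before concludes. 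The only real work is the geometric observation, and even there the single delicate point is the uniform treatment of $x_0\in\partial C$ and $x_0\in\interior C$, which is exactly what the ``segment stays in the interior'' principle handles.
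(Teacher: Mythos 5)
Your proposal is correct and follows essentially the same route as the paper: inscribe a truncated cone $K(x_0,\xi,\delta,s)$ with vertex $x_0$ inside $\interior C\cup\{x_0\}$ (the paper simply asserts this ``by convexity,'' while you supply the standard segment-into-interior argument) and then invoke Proposition \ref{densitypointcapcone} together with monotonicity of the Lebesgue measure, and likewise its second assertion for the ``moreover'' part.
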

\begin{proof}
	We observe that by convexity of $C$ there exists a cone $K\left(x_{0},\xi,\delta\right)$, for some $\xi\in\partial B(0,1)$ and $\delta \in (0,1)$, such that for $s$ sufficiently small 
	\begin{equation*}
		K\left(x_{0},\xi,\delta,s \right) \subset \interior C\cup \{x_0 \} .
	\end{equation*}
	By the monotonicity of the Lebesgue measure, the claim now follows from Proposition \ref{densitypointcapcone}.
\end{proof}

Before the first main result of this paper we introduce quasiconvex and strictly quasiconvex functions. We also present a technical lemma on convex sets that will be useful in the sequel.
\begin{dfn} We say that a function $h:\R^d \to \R$ is quasiconvex if for all $x,y\in\R^d$ and for all $t \in [0,1]$ we have 
	\[h((1-t)x+ty)\leq \max\{h(x),h(y)\}.\]
	We say that $h$ is strictly quasiconvex if for all $x\neq y$ and for all $t \in (0,1)$  
	\[h((1-t)x+ty)< \max\{h(x),h(y)\}.\]
\end{dfn}

\begin{rmk}\label{levelsets} Immediate properties are the following
	
	\begin{enumerate}
		\item $h$ is quasiconvex if and only if for all $\lambda \in \R$ the sublevel set $C_\lambda=\{x \ : \ h(x) \leq \lambda\}$ is convex (possibly empty);
		\item if $h$ is continuous and strictly quasiconvex, then the only sublevel sets (possibly empty) with empty interior are the sets $C_m$ with $m\le\inf h$; 
		\item if $h$ is continuous and strictly quasiconvex, then the level set $\{x \ : \ h(x)=\lambda\}$ has always empty interior. 
	\end{enumerate}	
\end{rmk}
%ANNA4
\begin{lmm}\label{existencecone}
	Let $B,C\subset\R^{d}$ be two closed convex sets with nonempty interiors, differentiable boundaries, and such that  $B\cap C\neq\emptyset$.
	Let $x\in \partial B\cap\partial C$ be such that $n_{B}(x)\neq n_{C}(x)$, where $n_{B}(x)$ and $n_{C}(x)$ are the unit outer normals of $B$ and $C$ at $x$, respectively.
	Then there exists a point $a\in \interior {B}\cap\{w: (w-x)\cdot n_{C}(x)>0  \}$ and $\delta,s >0$ such that the intersection of the cone of direction $a-x$ and amplitude $\delta$ with the ball centered at $x$ and of radius $s$ is all contained in $\interior B$, that is 
	\begin{equation}
		\label{coneinside}K\left(x,\frac{a-x}{|a-x|},\delta, s\right)\subset \interior B\cup\{x\}.
	\end{equation}
	Moreover for the reverse cone we have
	\begin{equation}
		\label{reversecone}
		K\left(x,-\frac{a-x}{|a-x|},\delta, s \right) \subset \interior C \cup \{x\}.
	\end{equation} 
\end{lmm}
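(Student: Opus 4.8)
The plan is to produce a single unit direction $\xi$ that points into $\interior B$ and whose opposite points into $\interior C$, and then to fatten $\xi$ to a genuine truncated cone by a convex–combination argument; the hypothesis $n_{B}(x)\neq n_{C}(x)$ is exactly what makes such a $\xi$ available. The one elementary fact I rely on is: if $D\subset\R^{d}$ is closed and convex with $\interior D\neq\emptyset$ and $\partial D$ is differentiable at $p\in\partial D$, then the unique supporting hyperplane at $p$ is $\{w:(w-p)\cdot n_{D}(p)=0\}$, so $\interior D\subset\{w:(w-p)\cdot n_{D}(p)<0\}$; and conversely $p+tv\in\interior D$ for all sufficiently small $t>0$ whenever $v\cdot n_{D}(p)<0$ — one sees this by writing $\partial D$ near $p$ as the graph of a differentiable function with vanishing gradient at $p$, with $D$ lying below it.

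\emph{Step 1 (choice of direction).} I would take $u:=n_{C}(x)-n_{B}(x)\neq 0$. Since $n_{B}(x),n_{C}(x)$ are distinct unit vectors, $n_{B}(x)\cdot n_{C}(x)<1$, hence
\[ u\cdot n_{B}(x)=n_{B}(x)\cdot n_{C}(x)-1<0,\qquad u\cdot n_{C}(x)=1-n_{B}(x)\cdot n_{C}(x)>0. \]
By the fact above applied to $B$, $x+tu\in\interior B$ for all small $t>0$; fix such a $t_{0}$ and set $a:=x+t_{0}u$, so $a\in\interior B$ and $(a-x)\cdot n_{C}(x)=t_{0}\,u\cdot n_{C}(x)>0$, i.e. $a\in\{w:(w-x)\cdot n_{C}(x)>0\}$, as the statement demands. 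Since also $(-u)\cdot n_{C}(x)<0$, the same fact applied to $C$ yields $t_{1}>0$ with $a':=x-t_{1}u\in\interior C$.

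\emph{Step 2 (fattening and conclusion).} Let $\xi:=(a-x)/|a-x|$ and $L:=|a-x|$. Choose $\rho\in(0,L)$ with $\overline{B(a,\rho)}\subset B$, and put $\delta_{B}:=\rho^{2}/(4L^{2})$ and $s_{B}:=L/2$. Given $w\in K(x,\xi,\delta_{B},s_{B})$ with $w\neq x$, write $w=x+rv$ with $|v|=1$, $v\cdot\xi\ge 1-\delta_{B}$ and $0<r<s_{B}$; then $q:=x+Lv$ satisfies $|q-a|=L|v-\xi|\le L\sqrt{2\delta_{B}}=\rho/\sqrt2<\rho$, so $q\in B(a,\rho)\subset\interior B$, and $w=(1-r/L)x+(r/L)q$ is a convex combination with $r/L\in(0,\tfrac12)$, hence $w\in\interior B$. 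Thus $K(x,\xi,\delta_{B},s_{B})\subset\interior B\cup\{x\}$. Running the identical argument for $C$ with $a'$ in place of $a$ (note $(a'-x)/|a'-x|=-\xi$) produces $\delta_{C},s_{C}>0$ with $K(x,-\xi,\delta_{C},s_{C})\subset\interior C\cup\{x\}$. Taking $\delta:=\min\{\delta_{B},\delta_{C}\}$ and $s:=\min\{s_{B},s_{C}\}$ — so $\delta\in(0,2]$, since $\delta_{B},\delta_{C}<1/4$ — and using that $K(x,\pm\xi,\delta,s)$ is contained in the corresponding larger cone, we obtain \eqref{coneinside} and \eqref{reversecone} with this $\delta,s$ and this $a$.

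The one genuinely delicate point — the part I would be most careful about — is the converse half of the boundary fact used in Step 1: that $v\cdot n_{D}(x)<0$ forces the ray $x+tv$ to enter the \emph{interior} of $D$ for small $t$, not merely $D$ itself. This is precisely where differentiability of $\partial D$ at $x$ is needed; at a corner, $n_{D}(x)$ is not even defined and the conclusion can fail. Everything else is routine convexity.
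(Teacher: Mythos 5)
Your proof is correct and follows essentially the same route as the paper's: produce a point $a\in\interior B$ on the positive side of the hyperplane $\{(w-x)\cdot n_C(x)=0\}$, fatten the segment $[x,a]$ into a truncated cone by convex combinations with a small ball around $a$, and repeat on the opposite ray for $C$ using the fact that a direction with strictly negative inner product against $n_C(x)$ enters $\interior C$ (the paper proves this via the local graph representation of $\partial C$, exactly the delicate point you flag). The only cosmetic difference is that you take the explicit direction $u=n_C(x)-n_B(x)$, which handles both sets symmetrically, whereas the paper obtains $a$ from the uniqueness of the supporting normal of $B$; both are fine.
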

\begin{proof}
	By the differentiability of the boundaries of sets $B$ and $C$ we have that $n_{B}(x)$ and $n_{C}(x)$ are well-defined elements of $\R^d$. In particular, by convexity $n_{B}(x)$ is the only unit vector such that 
	\begin{equation*}
		(b-x)\cdot n_{B}(x)\le 0, \quad \mbox{for every }b\in B.
	\end{equation*}
	Hence there exists $a\in \interior B$ such that 
	\begin{equation}\label{posscalarprod}
		(a-x)\cdot n_{C}(x)>0.
	\end{equation}
	By choosing a suitable $s<|a-x|$ and $\delta$ close enough to $0$, by the continuity of the scalar product and the convexity of $B$ we can fix the cone $K\left(x,\frac{a-x}{|a-x|},\delta,s \right)$ that satisfies the first claim \eqref{coneinside}.\\
	Moreover, by Condition \eqref{posscalarprod} we have $-\frac{a-x}{|a-x|}\cdot n_C (x)<0$. 
	Thus for a suitable choice of $\tau>0$, we have 
	\footnote{There exist $\eps>0$ and a concave function $\phi:B^{d-1}(0,\rho)\to\R$ such that $x=(\mathbf{0},\phi(\mathbf{0}))$ and such that $\partial C$ is the graph of $\phi$ in a sufficiently small neighborhood of $x$. In this setting $n_{C}(x)=\dfrac{1}{\sqrt{1+|\nabla\phi (\mathbf{0})|^2}}(-\nabla \phi (\mathbf{0}),1)$. We use the notation  $(\mathbf{v},v_d)=v=-\frac{a-x}{|a-x|}$, where $\mathbf{v}\in \R^{d-1}$ and $v_d\in\R$. The condition $v\cdot n_C(x)<0$ becomes $\mathbf{v}\cdot \nabla \phi (\mathbf{0})>v_d$. Proving that $x+\tau v\in \interior C$, for some $\tau>0$, can be done by showing that $\phi(\tau \mathbf{v})> \phi (\mathbf{0})+\tau v_d$. The latter inequality follows directly by the fact that $\lim_{t\to 0}\frac{\phi(t\mathbf{v})-\phi (\mathbf{0})}{t}> v_d$.  } $x-\frac{a-x}{|a-x|}\tau\in \interior C$. Arguing as before, there exists $s'<\left|x-\frac{a-x}{|a-x|}\tau-x\right|=\tau$ and $\delta'$ close enough to $0$, such that $K\left(x,-\frac{a-x}{|a-x|},\delta',s'\right)$ satisfies the \eqref{reversecone}. 
	Therefore, possibly substituting  $s$ with $\min\{s,s'\}$ and $\delta$ with $\min\{\delta,\delta'\}$, \eqref{coneinside} and \eqref{reversecone} hold simultaneously.
\end{proof}

\noindent\textbf{Notation.}
Given $y\in \R^d$ and $\lambda \in \R$ we denote, whenever it is possible, by 
\[n_{c(\cdot, y)}(x)\]
the unit outer normal to the sublevel set $C_\lambda=\{z \ : \ c(z,y) \leq \lambda\}$ at the 
point $x \in \partial C_\lambda$. 

\begin{thm}\label{beforemaintheorem}
	Let $\mu,\nu$ be two Borel probability measures on $\R^{d}$ with compact supports and $\mu<<\Leb^{d}$. Let $c:\R^{d}\times\R^{d}\to\R^{+}$ be a continuous function satisfying the following properties:
	\begin{enumerate}[(i)]
		\item\label{costass1} For all $x$ there exists a unique $y\in \R^d$ such that $c(x,y)=0$, and for every $y\in\R^d$ we have \footnote{By the strict quasiconvexity of $c(\cdot,y)$ (Assumption (ii)), if there exists a point $x$ such that $c(x,y)=\inf_{z\in\R^d}c(z,y)$, then such a point is also unique.} $\inf_{x\in\R^d}c(x,y)=0$;
		\item\label{costass2} $c(\cdot,y)$ is strictly quasiconvex for every $y$, i.e. 
		$$c((1-t)x+t\bar{x},y)<\max\{c(x,y),c(\bar{x},y)\}$$ for all $t\in(0,1)$ 
		and for every $x,\bar{x}\in\R^{d}$, $x\neq\bar{x}$;
		\item\label{costass3}  for all $\lambda >0$ and $y\in\R^d$ the convex set $C_\lambda= \{z \; : \; c(z,y) \leq \lambda\}$ has 
		differentiable $(\C^1)$ boundary;
		\item\label{costass4}
		$c$ satisfies a ``twist kind'' condition, that is: for every $x,y,\tilde{y}\in\R^{d}$ and for all $\lambda>0$
		\begin{equation*}
			\begin{cases}
				c(x,y)=c(x,\tilde{y})=\lambda \\
				n_{c(\cdot, y)}(x)=n_{c(\cdot, \tilde y)}(x)
			\end{cases}\implies y=\tilde{y}.
		\end{equation*}
	\end{enumerate}
	Let $\gamma\in\Pi(\mu,\nu)$ and $(x,\tilde{y})\in R({\gamma})$. Then for every $(x,y)\in \R^d\times\R^d$ with $y\neq\tilde y$, there exists $(x',y')\in R({\gamma})$ such that 
	\begin{equation}\label{improvement}
		\max\{c(x',y), \, c(x,y')\}<\max\{c(x,y), \, c(x',y')\}.
	\end{equation}
	Moreover, if $x\in\leb(A)$, for some Borel set $A$, then $(x',y')$ above can be taken with $x'\in A$.
\end{thm}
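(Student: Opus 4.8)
Set $\lambda:=c(x,y)$ and $m:=c(x,\tilde y)$; by \eqref{costass1} these are not both zero. I would split into the three cases $m<\lambda$, $m>\lambda$, $m=\lambda$, the last being the one that really uses \eqref{costass4}. In each case the pair $(x',y')$ is produced the same way: fix a cone $K=K(x,\xi,\delta,s)$ with vertex $x$ tailored to the situation, then take $r>0$ small and apply Proposition~\ref{densitypointcapcone} (in its version with the set $A$; if no $A$ is prescribed take $A=\R^d$, so that $x\in\leb(A)$) to get $x'\in\bigl(A\cap\gamma_*^{-1}(B(\tilde y,r))\cap K\bigr)\setminus\{x\}$ --- this set has positive $\Leb^d$-measure, hence is nonempty and is not $\{x\}$ --- together with $y'\in B(\tilde y,r)$ such that $(x',y')\in R(\gamma)$. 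It then remains to verify \eqref{improvement}, for which it suffices that both $c(x',y)$ and $c(x,y')$ be strictly less than $\max\{c(x,y),c(x',y')\}$. In the last two cases I would also use the following consequence of \eqref{costass3} (this is where that assumption is used globally): for fixed $x$, the map $z\mapsto n_{c(\cdot,z)}(x)$ is continuous at every $\bar z$ with $c(x,\bar z)>0$. Indeed, as $z\to\bar z$ the convex bodies $\{w:c(w,z)\le c(x,z)\}$ converge locally in Hausdorff distance to $\{w:c(w,\bar z)\le c(x,\bar z)\}$ --- by continuity of $c$ and the fact that, by strict quasiconvexity, a sublevel set at a non-minimal level is the closure of the corresponding strict sublevel set (Remark~\ref{levelsets}) --- all of them have $x$ on their boundary, and the limit body has by \eqref{costass3} a unique supporting hyperplane at $x$; any limit of the supporting hyperplanes at $x$ is again a supporting hyperplane there, hence coincides with it, so the normals converge.

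If $m<\lambda$ (so $\lambda>0$), the set $E:=\{w:c(w,y)\le\lambda\}$ is a convex body with $\C^1$ boundary and $x\in\partial E$, so by convexity there is a cone $K=K(x,\xi,\delta,s)\subset\interior E\cup\{x\}$. Choosing $r$ so small that $c(x,y')<\lambda$ for $|y'-\tilde y|\le r$ (possible since $c(x,\tilde y)=m<\lambda$) and producing $(x',y')$ as above, one has $c(x',y)<\lambda$ (because $x'\in\interior E$) and $c(x,y')<\lambda=c(x,y)$, so \eqref{improvement} holds. If $m>\lambda$ (so $m>0$), put $\nu:=n_{c(\cdot,\tilde y)}(x)$ and $K:=K(x,\nu,\tfrac14,s)$ with $s$ so small that $c(x',y)<\tfrac{\lambda+m}{2}$ for all $x'\in B(x,s)$; by the auxiliary fact choose $r$ so small that $c(x,y')>\tfrac{\lambda+m}{2}$ and $|n_{c(\cdot,y')}(x)-\nu|<\tfrac14$ for $|y'-\tilde y|\le r$ (in particular $c(x,y')>0$, so that $n_{c(\cdot,y')}(x)$ is defined). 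Producing $(x',y')$ as above, $x'\in K\setminus\{x\}$ yields $(x'-x)\cdot\nu\ge\tfrac34|x'-x|$, hence $(x'-x)\cdot n_{c(\cdot,y')}(x)\ge(\tfrac34-\tfrac14)|x'-x|>0$, hence $x'\notin\{w:c(w,y')\le c(x,y')\}$ by convexity, i.e.\ $c(x',y')>c(x,y')$; since moreover $c(x',y)<\tfrac{\lambda+m}{2}<c(x,y')<c(x',y')$, \eqref{improvement} holds again.

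In the case $m=\lambda$ (necessarily $\lambda=m>0$, for $\lambda=m=0$ would force $y=\tilde y$ by \eqref{costass1}) the sets $E:=\{w:c(w,y)\le\lambda\}$ and $F:=\{w:c(w,\tilde y)\le\lambda\}$ are convex bodies with $\C^1$ boundary and $x\in\partial E\cap\partial F$; since $y\neq\tilde y$ and $c(x,y)=c(x,\tilde y)=\lambda$, assumption \eqref{costass4} gives $n_E(x)=n_{c(\cdot,y)}(x)\neq n_{c(\cdot,\tilde y)}(x)=n_F(x)$. Applying Lemma~\ref{existencecone} to the pair $E,F$ (in the roles of $B,C$) I obtain $a\in\interior E$ with $(a-x)\cdot n_F(x)>0$ and a cone $K=K(x,\xi,\delta,s)\subset\interior E\cup\{x\}$ with $\xi=(a-x)/|a-x|$; after shrinking $\delta$ if needed I may assume $(x'-x)\cdot n_F(x)\ge\eps_1|x'-x|$ for every $x'\in K$, with $\eps_1:=\tfrac12\,\xi\cdot n_F(x)>0$. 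By the auxiliary fact choose $r$ so small that $c(x,y')>0$ and $|n_{c(\cdot,y')}(x)-n_F(x)|<\eps_1$ for $|y'-\tilde y|\le r$, and produce $(x',y')$ as above. Then $x'\in\interior E$ gives $c(x',y)<\lambda=c(x,y)$, while $(x'-x)\cdot n_{c(\cdot,y')}(x)\ge\eps_1|x'-x|-|x'-x|\,|n_{c(\cdot,y')}(x)-n_F(x)|>0$ gives, as in the previous case, $c(x,y')<c(x',y')$; since $c(x',y)<c(x,y)\le\max\{c(x,y),c(x',y')\}$ and $c(x,y')<c(x',y')\le\max\{c(x,y),c(x',y')\}$, \eqref{improvement} follows.

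The step I expect to be the main obstacle is exactly the case $m=\lambda$: there condition \eqref{costass4} is indispensable, as it is what guarantees that the two outer normals at $x$ differ, and hence that Lemma~\ref{existencecone} can produce a single cone lying at once inside $\interior E$ and on the strictly positive side of $n_F(x)$. A second point requiring care --- relevant in both cases $m\ge\lambda$ --- is the continuity of $z\mapsto n_{c(\cdot,z)}(x)$: without it the cone could not be fixed before the (unknown) target point $y'$ is known. Finally one must choose the small parameters in the right order: first $s$ and $\delta$, which depend only on the cost and on the geometry at $x$, and only afterwards $r$.
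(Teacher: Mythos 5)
Your proposal is correct, and its skeleton coincides with the paper's: the same three-way case split on the sign of $c(x,y)-c(x,\tilde y)$, the same use of Proposition~\ref{densitypointcapcone} to extract $(x',y')$ from a cone at $x$ intersected with $\gamma_*^{-1}(B(\tilde y,r))$ (and with $A$ for the ``bis'' statements), Lemma~\ref{existencecone} in the equal-level case, and assumption~(\ref{costass4}) to rule out equal normals. Where you genuinely diverge is in the mechanism that yields $c(x,y')<c(x',y')$ in the cases $c(x,\tilde y)\ge c(x,y)$: the paper builds the cone $\tilde K$ through a ball around $\tilde a=x-s\,n_{c(\cdot,\tilde y)}(x)$ inside $\interior\tilde C_\lambda$, takes $x'$ in the \emph{reflected} cone, and runs strict quasiconvexity along the line $x'$--$x$--$z$ with quantitative $\eps$-estimates; you instead prove that $y'\mapsto n_{c(\cdot,y')}(x)$ is continuous wherever $c(x,y')>0$ and conclude $c(x',y')>c(x,y')$ directly from the supporting-hyperplane inequality $(w-x)\cdot n\le 0$ for $w$ in the sublevel body. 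Your auxiliary continuity claim is the one step that must be checked carefully, and your sketch of it is sound: the liminf inclusion of the sublevel bodies follows from the density of the strict sublevel set (Remark~\ref{levelsets}) plus continuity of $c$, any limit of supporting normals at $x$ is then a supporting normal of the limit body, and assumption~(\ref{costass3}) forces uniqueness; note also that $x$ does lie on $\partial\{w:c(w,y')\le c(x,y')\}$ whenever $c(x,y')>0$, by strict quasiconvexity. The trade-off is clear: the paper's argument never needs normals of $c(\cdot,y')$ for $y'\ne\tilde y$ and leans only on strict quasiconvexity, while your route pays for an extra lemma but then treats the cases $c(x,\tilde y)>c(x,y)$ and $c(x,\tilde y)=c(x,y)$ almost uniformly --- in the former you do not even need Lemma~\ref{existencecone}, since any cone around the outward normal direction $n_{c(\cdot,\tilde y)}(x)$ already forces $c(x',y')>c(x,y')$.
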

\begin{rmk}
	We call condition ($\ref{costass4}$) above ``twist kind'' because whenever $c$ is differentiable with respect to $x$, if  $\nabla_{x}c(x,\tilde{y})\neq0$ and $\nabla_{x}c(x,y)\neq 0$, then the condition becomes
	\begin{equation*}
		\begin{cases}
			c(x,y)=c(x,\tilde{y})\\
			\frac{\nabla_{x}c(x,y)}{|\nabla_{x}c(x,y)|}=\frac{\nabla_{x}c(x,\tilde{y})}{|\nabla_{x}c(x,\tilde{y})|}
		\end{cases}\implies y=\tilde{y}.
	\end{equation*}
	This looks like the twist condition in the classical integral optimal transport.
\end{rmk}
\begin{proof}
	\underline{\textbf{Case 1}}: $c(x,y)>c(x,\ytilde)$. \\
	First of all we fix $\lambda=c(x,y)$ and we consider $C_\lambda=\{z \ : \ c(z,y)\le \lambda\}$. Notice that $\interior {C_{\lambda}}$ is nonempty by part (2) of Remark \ref{levelsets}. Indeed $c(x,y)=\lambda >0=\inf_{w\in\R^d} c(w,\tilde y)$, by Assumption (i). The continuity of the cost function allows us to fix $r>0$ such that 
	\begin{equation}\label{bycontinuity}
		c(x,y')<c(x,y)~~~\text{for all } y'\in  B(\tilde{y},r)\,.
	\end{equation} 
	Moreover, since $(x,\ytilde)\in R(\gamma)$, thanks to Corollary \ref{positivemeasure}, we know that there exists $(x',y') \in R(\gamma)$ with $x' \in \interior {C_{\lambda}=\{z \ : \ c(z,y)< \lambda\}}$ (so that $x\not=x'$) and  $y'\in B(\tilde{y},r)$ that is
	\[(x',y')\in \left(\interior {C_{\lambda}} \times B(\tilde y,r)\right)\cap R(\gamma)\,.\]
	Notice that such a $y'$ exists by the definition of $ \gamma_*^{-1}(B(\tilde y,r))$ and it could happen that $y=y'$. 
	By Condition (\ref{bycontinuity}) we have  
	\[c(x,y')<c(x,y)\,.\] 
	We also have that 
	\[c(x',y)< c(x,y)\, , \]
	because $x'\in \interior {C_{\lambda}}$.
	These two inequalities imply that 
	\begin{equation*}
		\max\{c(x',y),c(x,y')\}< c(x,y) \le\max\{c(x,y),c(x',y')\},
	\end{equation*}
	concluding the proof for the Case 1.\\
	\underline{Case 1 bis:} If $x\in \leb(A)$, for a Borel set $A$, then by the second part of Corollary \ref{positivemeasure}, we can find $(x',y')\in R(\gamma)$, such that $x'\in \interior C_ \lambda\cap A$ and $y\in B(\tilde{y},r)$. As before, such $(x',y')$ satisfies the \eqref{improvement}.\\
	\underline{\textbf{Case 2}}: $c(x,y)<c(x,\tilde y)$. \\
	We follow a path similar to that of \cite{jylha2015} but with a different construction of cones.\\
	Fix $\lambda=c(x,\tilde{y})$, so that $x \in \partial\Ctildelambda$, where $\Ctildelambda=\{z \ : \ c(z,\tilde y)\le\lambda  \} $. Again we denote by $n_{c(\cdot, \tilde y)} (x)$  the unit outer normal to the set $\tilde C_\lambda$ at $x$. \\
	Let $s$ be a small, positive real number such that $\tilde a:= x-sn_{c(\cdot, \tilde y)} (x)\in \interior \tilde C_\lambda$, which is not empty thanks to  $(2)$ of Remark \ref{levelsets}.  
	We fix \[\varepsilon= \frac{1}{4}\min\left\{{c(x,\tilde y)- c(\tilde a, \tilde y), c(x, \tilde y) -c(x,y)}\right\}.\] Thanks to the continuity of $c$ it is possible to find a positive radius 
	$r$ such that $B(\tilde{a},r)\subset\tilde{C}_\lambda$, for every $(\tilde z,y') \in B(\tilde a, r) \times B(\tilde y, r)$ we have 
	\begin{equation}
		\label{bycontinuity1}
		c(\tilde z,y') < c(\tilde a , \tilde y) + \varepsilon
	\end{equation}
	and for every $y' \in B(\tilde y, r) $ it holds 
	\begin{equation}\label{bycontinuity2}
		c(x, \tilde y) - \varepsilon < c(x,y').
	\end{equation}
	Consider the set $\tilde K= (1-t)x + t B(\tilde a, r)$ with\footnote{We use the notation $\tilde K$ because also this set is a cone, even if slightly different from the cones  defined by \eqref{compactcone}.}  $t \in (0,1)$. Thanks to the convexity, $\tilde K$ is contained in $\tilde{C}_\lambda$. We claim that
	\[\tilde K\subset \{z \ : \ c(z,y') < c(x,y')\} \quad \text{for every} \ y' \in B(\tilde y,r). \] 
Let $z\in\tilde K$. Then $z=(1-t)x+t\tilde z$, for some $\tilde z\in B(\tilde{a},r)$ and some $t\in(0,1)$. We observe that $c(\tilde z,y')< c(x,y')$. Indeed
		\begin{equation}\label{Ktildeinclusion}
			c(\tilde z,y')<c(\tilde{a},\tilde{y})+\eps\le \frac{1}{4}c(x,\tilde{y})+\frac{3}{4}c(\tilde a,\tilde y)<c(x,\tilde y)-3\eps<c(x,y'),
		\end{equation}
		where the first inequality is due to Condition \eqref{bycontinuity1}, the second and the third to the definition of $\eps$ and the last one to \eqref{bycontinuity2}.
		We conclude by the strict quasiconvexity of $c(\cdot, y')$, which implies $c(z,y')<\max\{c(x,y'), \, c(\tilde z,y')\}=c(x,y')$.\\
	On the other hand, if we consider the reflected cone $K= (1-t)x+tB(\tilde a, r)$ with $t \in (-1,0)$, again by the strict quasiconvexity of $c(\cdot, y')$ we show that
	\begin{equation*}
		K \subset \{z \ : \ c(z,y') > c(x,y')\}\quad \text{for every } y' \in B(\tilde y,r).
	\end{equation*}
	Indeed, let $z\in K$, then $z=(1-t)x+t\tilde z$, for some $\tilde z \in B(\tilde a,r)$ and $t\in (-1,0)$. So $x= sz+(1-s)\tilde z$, with $s=\frac{1}{(1-t)}$. Thus $c(x,y')<\max\{c(z,y'),c(\tilde z,y')\}=c(z,y')$, where the last equality is due to \eqref{Ktildeinclusion},
	and therefore for every $x' \in K$ and $y' \in B(\tilde y, r) $ it holds
	\begin{equation*}
	c(x,y')< c(x',y').
	\end{equation*}
	Let now $\rho>0$ such that $c(x',y)<c(x,y)+\eps$ for every $x'\in K\cap B(x,\rho)$. Then for every $x'\in K\cap B(x,\rho)$ and $y'\in B(\tilde y, r)$ it also holds:
	\[c(x',y)<c(x,y)+\eps<\frac{1}{4}c(x,\tilde y)+\frac{3}{4}c(x,y)\le c(x,\tilde y)- 3\eps< c(x,y')<c(x',y'). \]
	The proof of Case 2 is concluded thanks to Proposition \ref{densitypointcapcone}, for which it is possible to choose $x'$ in the set 
	\[\left(B(x,\rho)\cap K \cap \gamma_*^{-1}(B(\tilde y,r) )\right)\] and therefore $(x',y')$ in the set  
	\[\left(\left( K\cap B(x,\rho)\right)\times B(\tilde y, r) \right)\cap R(\gamma).\] 
	\underline{Case 2 bis:} If $x\in \leb(A)$ for some Borel set $A$, by the last part of Proposition \ref{densitypointcapcone}, we can choose $(x',y')$ satisfying the \eqref{improvement} in the set 
	\[\left(\left(A\cap K\cap B(x,\rho)\right)\times B(\tilde y, r) \right)\cap R( \gamma).\] 
	\underline{\textbf{Case 3}}: $c(x,y)=c(x,\ytilde)=\lambda>0$ (if $c(x,y)=c(x,\ytilde)=0$ then $y=\tilde y$ by Assumption (i)) and $ n_{c(\cdot,y)} (x) \neq  n_{c(\cdot,\tilde y)} (x)$. \\ In Figure \ref{case3maintheorem} we provide a visual description of the proof of this case, where we have denoted the vector $n_{c(\cdot,\tilde y)}(x)$ by $\overrightarrow{n}$.
	
	\begin{figure}[h]
		\begin{tikzpicture}[scale=2.5]
			\coordinate (O) at (0,0);
			\coordinate (b) at (3,0);
			\coordinate (c) at (3,1);
			\coordinate (d) at (1.7,1.6);
			\coordinate (e) at (0,1);
			
			\path[name path = convex1, draw,use Hobby shortcut, thick, closed=true ]
			(O)..(b)..(c)..(d)..(e) ;
			%\draw (O) node{$\bullet$};
			%\draw (b) node{$\bullet$};
			%\draw (c) node{$\bullet$};
			%\draw (d) node{$\bullet$};
			\draw (e) node[anchor=east]{$C_{\lambda}$};
			
			\coordinate (f) at (3,-1);
			\coordinate (g) at (6,0);
			\coordinate (h) at (5,2);
			\coordinate (i) at (3,1);

			\path[name path = convex2, draw,use Hobby shortcut,thick, closed=true]
			(f)..(g)..(h)..(i);
			
			%\draw (f) node{$\bullet$};
			%\draw (g) node{$\bullet$};
			\draw (h) node[anchor=south west]{$\tilde{C}_{\lambda}$};
			\node [thick, label=90:$x$] at (i) {};
			\draw (i) node{$\bullet$};
			
			%\draw (i) node{$\bullet$};
			\path[name path = aux1] (i) circle [radius = 1bp];	%piccolissimo cerchio intorno al punto (i), che sarebbe x
			\path[name intersections = {of = convex2 and aux1}] ; %le due intersesioni tra il piccolissimo cerchio e il convesso $\tilde C$
			
			\draw [-latex, thick, name = normal] (i)--node[above=0.2pt]{$\overrightarrow{n}$} 
			($(i)!0.75cm!90:($(intersection-2)!.75cm!(intersection-1)$)$) ; %la normale costruita partendo da (i) e ortogonale al segmento che unisce le due intersezioni
			%--node[above=0.5pt]{$\overrightarrow{n}$}

			\coordinate (a') at (1.7,1.4);
			\coordinate (b') at (1.7,0.6);
			\filldraw [white, name path = axis1] (a') --
			($(a')!0.75cm!270:(i)$);
			\filldraw  [white, name path = axis2](b') --
			($(b')!0.75cm!90:(i)$);
			\path[name intersections = {of = axis1 and axis2, by=p}];
			%\draw (p) node{$\bullet$};
			
			%\path[name path = circle] (i) circle (b');
			%\path[line width=1pt] at (i') [circle through={(b')}]{};
			\tkzDrawCircle[name path = biggcircle, color=white, scale = 1.05](i,b');%palla invisibile che mi serve per definire il punto a.
			\tkzDrawCircle[name path = bigcircle, thin](i,b');%palla di centro x e raggio s
			\tkzDrawCircle[name path = smallcircle, thin, scale = 0.5](i,b');%palla di centro x e raggio s/2
			\coordinate (B) at (2.7,2.3); %coordinate utili per il label alla palla di raggio s
			\draw (B) node[anchor=south]{$B(x,s)$}; %label alla palla di raggio s
			\coordinate (B') at (3.5,0.3);%coordinate utili per il label alla palla di raggio s/2
			\draw (B') node{$B(x,\frac{s}{2})$}; %label alla palla di raggio s/2
			
			\draw [name path =coneedge1, thick] ($(i)!1.6cm!(a')$)--($(i)!1.6cm!180:(a')$); %lato1 cono 
			\draw [name path =coneedge2, thick] ($(i)!1.6cm!(b')$)--($(i)!1.6cm!180:(b')$)node[anchor=north] {$\tilde{K}$}; %lato2 cono 
			\draw [name path= coneaxis, dashed] ($(i)!1.6cm!(p)$)--($(i)!1.6cm!180:(p)$); %asse del cono
			\path[name intersections = {of = biggcircle and coneaxis, by=a}]; %punto a che definisce l'asse del cono a-x (dato dall'intersezione di biggcircle e coneaxis)
			\draw (a') node[anchor=south west] {$K$}; %label al cono K
			
			\node [label={[label distance=0.02cm]170:$a$}] at (a) {}; %label al punto a
			\draw (a) node{$\bullet$};
			\coordinate (x') at (1.9,1.2); %punto x' del teorema
			\draw (x') node{$\bullet$};
			\node [label=180:$x'$] at (x') {};%label al punto x'
			\draw [name path =line] ($(i)!1.6cm!(x')$)--($(i)!1.6cm!180:(x')$);
			\coordinate (z) at ($(i)!1cm!180:(x')$);
			\draw (z) node{$\bullet$};
			\node [label=10:$z$] at (z) {};
			\coordinate (y) at (1,0);
			\draw (y) node{$\bullet$};
			\node [label=180:${y}$] at (y) {};
			\coordinate (ytilde) at (5.5,0.5);
			\draw  (ytilde) node{$\bullet$};
			\node [label=180:$\tilde{y}$] at (ytilde) {};
			\draw[name path = circleytilde] (ytilde) circle [radius = 0.3cm];
			\coordinate (q) at (5.8,0.5);
			%\draw (q) node{$\bullet$};
			\draw [dashed ](ytilde)-- node[below=1pt] {$r$}(q);
			\coordinate (y') at (5.6,0.6);
			\draw (y') node{$\bullet$};
			\node [label=0:$y'$] at (y') {};
		\end{tikzpicture}
		\caption{Case 2 - Theorem \ref{beforemaintheorem}} \label{case3maintheorem}
	\end{figure}
	\noindent Since $x\in\partial\Clambda\cap\partial\Ctildelambda$ and since by part (2) of Remark \ref{levelsets} these sublevel sets have nonempty interiors, the assumptions of Lemma \ref{existencecone} above are satisfied. Therefore we can construct two ``truncated'' cones, one inside $\interior{\Clambda}\cup\{x\}$ and the ``reverse'' one contained in $\interior\Ctildelambda$. More precisely there exist $a \in \interior{\Clambda}$ and $\delta, s>0$ such that 
	\begin{align*}
		&K\left(x,\frac{a-x}{|a-x|},\delta,s \right)\subset \interior C_\lambda\cup\{x\} \ \mbox{and} \ \\
		&K\left(x,-\frac{a-x}{|a-x|},\delta,s \right)\subset{\interior\Ctildelambda}\cup\{x\}.
	\end{align*}
	In order to simplify the notation, let us denote by
	\begin{equation*}
		K:=K\left(x,\frac{a-x}{|a-x|},\delta,s \right) \quad \mbox{and} \quad \tilde{K}:=K \left(x,-\frac{a-x}{|a-x|},\delta,s \right).
	\end{equation*}
	Since $c(x,\ytilde)>c(z,\ytilde)$ for every $z\in \interior \Ctildelambda$, if we take $z$ in a set well-contained in $\interior \Ctildelambda$, for instance the portion of annulus $\tilde{K}\setminus B\left(x,\frac{s}{2}\right)$, by the continuity of the cost there exists $r>0$ such that 
	\begin{equation}\label{inequality}
		c(x,y')>c(z,y'), \quad \mbox{for all }y'\in B(\ytilde, r), \ \mbox{for all }z\in \tilde{K}\setminus B\left(x,\frac{s}{2}\right).
	\end{equation}
	Let $(x',y')\in \left( K \times B(\ytilde,r)\right)\cap R(\gamma)$ with $x'\neq x$, which exists by Proposition \ref{densitypointcapcone} (and definition of $\gamma_*^{-1}(\tilde y,r)$).
	Consider $z\in \tilde{K}\setminus B\left(x,\frac{s}{2}\right)$ on the line passing from $x'$ and $x$ (the order of the points on the line being $x'$, $x$, $z$), by the strict quasiconvexity of $c$ with respect to the first variable we have
	\begin{equation*}
		c(x,y')<\max\{c(x',y'),c(z,y') \}.
	\end{equation*}
	By Condition \eqref{inequality}, we infer that $c(x,y')<c(x',y')$. \\
	Finally, since $x' \in\interior{\Clambda}$, we have $c(x',y)<c(x,y)$ (again we use that, by the continuity and the strict quasiconvexity of $c$, $\partial C_\lambda =\{z \ : c(z,y)=\lambda\}$). Hence Condition \eqref{improvement} holds.\\
	\underline{Case 3 bis:} Also in this case thanks to the second part of Proposition \ref{densitypointcapcone}, we can choose our $(x',y')\in ((A\cap K)\times B(\tilde y, r))\cap R( \gamma)$ if $x\in \leb(A)$, for some Borel set $A$.\\
	\underline{\textbf{Case 4}}: $c(x,y)=c(x,\ytilde)=\lambda>0$ and 
	\[ n_{c(\cdot,y)} (x) = n_{c(\cdot,\tilde y)} (x).\]
	If this case occurs, by the assumption \eqref{costass4} (the ``twist kind'' condition) we know that $y=\ytilde$ against our assumption.
\end{proof}
We are now ready to state and prove the main result of this section, which follows directly by Theorem \ref{beforemaintheorem}.
\begin{thm}\label{maintheorem}
Let $\mu,\nu$ be two Borel probability measures on $\R^{d}$ with compact supports and $\mu<<\Leb^{d}$ and let $c:\R^{d}\times\R^{d}\to\R^{+}$ be a continuous function satisfying the assumptions \eqref{costass1}-\eqref{costass4} of Theorem \ref{beforemaintheorem}.
	If $\gamma\in\Pi(\mu,\nu)$ is $\infty$-monotone, then $\gamma$ vanishes outside the graph of a Borel map $T\in\mathcal{T}(\mu,\nu)$. 
\end{thm}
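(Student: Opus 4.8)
The plan is to argue by contradiction, converting the single ``local'' improvement provided by Theorem \ref{beforemaintheorem} into a global obstruction. The first move is a reduction to the $\gamma$-regular set. Since $\mu \ll \Leb^d$, Lemma \ref{gammaconcentrated} gives that $\gamma$ is concentrated on $R(\gamma)$, which by Remark \ref{fullmeasure} is a Borel subset of $\supp\gamma$. The crucial preliminary observation is that $R(\gamma)$ is itself an $\infty$-monotone set: since $\gamma$ is IM it is concentrated on some IM set $\Gamma$, and the defining inequality of Definition \ref{defIM} passes to the closure because $c$ is continuous; hence $\overline{\Gamma}$ is a closed IM set of full $\gamma$-measure, so $\supp\gamma \subseteq \overline{\Gamma}$ and therefore $\supp\gamma$ --- a fortiori $R(\gamma)$ --- is IM.

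Next I would assume, for contradiction, that $\gamma$ does not vanish outside the graph of any Borel map. Disintegrating $\gamma = \int \gamma_x \, d\mu(x)$ with respect to $\mu$, this forces $\gamma_x$ to fail to be a Dirac mass for $x$ in a Borel set of positive $\mu$-measure; moreover $\gamma(R(\gamma)) = 1$ implies that $\gamma_x$ is concentrated on the section $R(\gamma)_x$ for $\mu$-a.e. $x$. I can therefore fix a single $x_0$ for which $\gamma_{x_0}$ is not a Dirac mass and is concentrated on $R(\gamma)_{x_0}$; then $R(\gamma)_{x_0}$ contains two distinct points $y_1 \neq y_2$, i.e. $(x_0,y_1),(x_0,y_2) \in R(\gamma)$. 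Applying Theorem \ref{beforemaintheorem} with $\gamma$-regular base point $(x_0,y_2)$ and auxiliary point $(x_0,y_1)$ (admissible since $y_1 \neq y_2$) produces $(x',y') \in R(\gamma)$ with
\[
\max\{c(x',y_1),\,c(x_0,y')\} < \max\{c(x_0,y_1),\,c(x',y')\}.
\]
But $(x_0,y_1)$ and $(x',y')$ both belong to $R(\gamma)$, which we just showed is IM, so Definition \ref{defIM} applied to this pair gives $\max\{c(x_0,y_1),c(x',y')\} \le \max\{c(x_0,y'),c(x',y_1)\}$, contradicting the strict inequality. Hence $\gamma$ is concentrated on a graph; since $x \mapsto \gamma_x$ is Borel and reading off the atom of a Dirac mass is a Borel operation, the graph can be taken to be the graph of a Borel map $T$, and then $T_\sharp\mu = \pi^2_\sharp\gamma = \nu$, so $T \in \mathcal{T}(\mu,\nu)$.

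I expect that there is no serious obstacle once the observation of the first paragraph is in place: it is precisely the fact that $R(\gamma)$ inherits $\infty$-monotonicity from $\supp\gamma$ that lets the conclusion of Theorem \ref{beforemaintheorem} --- which a priori only yields a point of $R(\gamma)$, not of the particular IM set $\Gamma$ one started with --- collide with the IM hypothesis. The two points requiring some care are standard measure theory: extracting two $\gamma$-regular points in a common fiber from the failure of the Monge property (a disintegration argument), and the final upgrade from ``concentrated on a graph'' to ``concentrated on the graph of a \emph{Borel} map'' via measurable selection.
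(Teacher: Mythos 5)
Your proof is correct and follows essentially the same route as the paper: both reduce to the Borel set $R(\gamma)$ of $\gamma$-regular points via Lemma \ref{gammaconcentrated}, derive a contradiction with $\infty$-monotonicity from Theorem \ref{beforemaintheorem} as soon as two points of $R(\gamma)$ share a first coordinate, and then upgrade to a Borel map by a standard selection fact (the paper cites Theorem 2.3 of \cite{Ambrosio} where you argue through the disintegration $x\mapsto\gamma_x$). Your explicit preliminary check that $R(\gamma)\subseteq\supp\gamma\subseteq\overline{\Gamma}$ is itself an IM set --- needed so that the pair produced by Theorem \ref{beforemaintheorem} actually violates Definition \ref{defIM} --- is a point the paper leaves implicit, and is a worthwhile addition.
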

\begin{proof}
	By Lemma \ref{gammaconcentrated} we know that $\gamma$ is concentrated on the set of $\gamma$-regular points $R(\gamma)$ and that the set $R(\gamma)$ is a Borel set. Therefore, it suffices to show that  the set $R(\gamma)$ is the graph of a function $T\in\mathcal{T}(\mu,\nu)$. More precisely it is enough to prove that the set $R(\gamma)$ is contained in the graph of some function $T:\R^{d}\to \R^{d}$. Indeed, Theorem 2.3 of \cite{Ambrosio} ensures that $T$ is a Borel map and that $\gamma=(Id\times T)_{\sharp}\mu$. 
	Let us assume, by contradiction, that there exist $(x,y),(x,\tilde{y})\in R(\gamma)$ with $y\ne \tilde y$. Then we can apply Theorem \ref{beforemaintheorem} thanks to which there exists $(x',y')\in R(\gamma) $ such that 
	\begin{equation*}
		\max\{c(x',y), \, c(x,y')\}<\max\{c(x,y), \, c(x',y')\},
	\end{equation*}
	contradicting the $\infty$-monotonicity of $\gamma$.
\end{proof}

\section{On the uniqueness of ICM optimal transport plans}
The next lemma is slightly less general than its equivalent in \cite{jylha2015} but we believe it makes more transparent the proof of uniqueness that will follow. 

\begin{lmm}\label{doublepoints} Let $\mu$ and $\nu$ be two probability measures on $\R^{d}$, with compact supports and  $\mu<<\Leb^{d}$. Assume that the function $c:\R^{d}\times \R^{d}\to \R^{+}$ satisfies the assumptions of Theorem \ref{beforemaintheorem}. Let $T, \tilde T \in \mathcal{T}(\mu, \nu)$ be two transport maps associated to two $\infty$-monotone transport plans $\gamma$ and $\tilde \gamma$, i.e., $\gamma=(Id\times T)_{\sharp}\mu$ and $\tilde \gamma=(Id\times \tilde T)_{\sharp}\mu $.
	Let $A \subset \R^d$ be a Borel set such that for all $x\in A$, one has $T(x) \neq \tilde T (x)$. Define 
	\begin{multline*}
		\mathcal{R} (A)=\{ z \in \R^d \ : \ \exists \, x \in A \ \mbox{s.t.} \\
		\max \{c(x,T(z)), c(z,\tilde T(x))\} < \max \{c(x,\tilde T(x)), c(z,T(z)) \}\}.
	\end{multline*}
	Then $\mu(A \setminus \mathcal{R} (A))=0$.
\end{lmm}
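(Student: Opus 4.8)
The plan is to reduce the statement to a pointwise application of Theorem~\ref{beforemaintheorem} together with Lemma~\ref{gammaconcentrated}, using the characterization of $\gamma$-regular points. First I would recall that $\tilde\gamma=(Id\times\tilde T)_\sharp\mu$ is concentrated on $R(\tilde\gamma)$ by Lemma~\ref{gammaconcentrated} (here $\mu\ll\Leb^d$ is used), and that $R(\tilde\gamma)\subset\supp\tilde\gamma\subset\graph\tilde T$ up to a $\mu$-null set. Fix a point $x$ in $A$ that is simultaneously a Lebesgue point of $A$ and such that $(x,\tilde T(x))\in R(\tilde\gamma)$; the set of such $x$ has full $\mu$-measure in $A$, since both conditions individually hold $\mu$-a.e.\ (the first because $\mu\ll\Leb^d$ so $\mu$-a.e.\ point of $A$ is a Lebesgue point of $A$, the second by Lemma~\ref{gammaconcentrated}). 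It therefore suffices to show that every such $x$ lies in $\mathcal{R}(A)$.

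So fix such an $x$, and set $\tilde y:=\tilde T(x)$ and $y:=T(x)$; by hypothesis $y\neq\tilde y$. Now apply Theorem~\ref{beforemaintheorem} with the plan $\tilde\gamma$, the regular point $(x,\tilde y)\in R(\tilde\gamma)$, the point $y\neq\tilde y$, and the Borel set $A$ (of which $x$ is a Lebesgue point). The theorem produces a point $(x',y')\in R(\tilde\gamma)$ with $x'\in A$ such that
\[
\max\{c(x',y),\,c(x,y')\}<\max\{c(x,y),\,c(x',y')\}.
\]
Since $(x',y')\in R(\tilde\gamma)\subset\graph\tilde T$ (again up to a $\mu$-null set, which we may absorb into the exceptional set since $x'$ can be chosen in the full-measure part of $A$), we have $y'=\tilde T(x')$, and $x'\in A$ also gives $y=T(x)=T(x')$? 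No — one must be careful here: $y=T(x)$ is attached to the fixed point $x$, not to $x'$. Rewriting with $z:=x'$ the inequality reads
\[
\max\{c(x,T(x)),\,\text{(stuff)}\}\ \ldots
\]
and one checks it matches the definition of $\mathcal{R}(A)$ verbatim after the substitution $c(x',y)=c(z,T(x))$, $c(x,y')=c(x,\tilde T(z))$, $c(x,y)=c(x,\tilde T(x))$, $c(x',y')=c(z,\tilde T(z))$; wait — this requires $c(x,y)=c(x,\tilde T(x))$, i.e.\ $y=\tilde T(x)$, which is false. Let me restate the matching correctly.

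The correct bookkeeping: apply Theorem~\ref{beforemaintheorem} to $\tilde\gamma$ with regular point $(x,\tilde T(x))$ and with the \emph{free} point taken to be $(x, T(x))$ — i.e.\ in the notation of the theorem, $\tilde y_{\mathrm{thm}}=\tilde T(x)$ and $y_{\mathrm{thm}}=T(x)$, and the ambient point also $x$. The conclusion gives $(z,\tilde T(z))\in R(\tilde\gamma)$, $z\in A$, with $\max\{c(z,T(x)),c(x,\tilde T(z))\}<\max\{c(x,T(x)),c(z,\tilde T(z))\}$. This is \emph{not quite} the displayed condition defining $\mathcal{R}(A)$, whose roles of the two maps are reversed. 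Hence the actual obstacle — and the one place the argument is delicate — is that $\mathcal{R}(A)$ as written involves $c(x,T(z))$ and $c(z,\tilde T(x))$ on the left, so one must instead apply the theorem to the plan $\gamma$ (not $\tilde\gamma$), at a regular point $(x,T(x))\in R(\gamma)$, with free point $(x,\tilde T(x))$: this yields $(z,T(z))\in R(\gamma)$, $z\in A$, with $\max\{c(z,\tilde T(x)),c(x,T(z))\}<\max\{c(x,\tilde T(x)),c(z,T(z))\}$, which is exactly the defining inequality of $\mathcal{R}(A)$. The hard part is thus purely a matter of choosing the right plan ($\gamma$, using $A$ as the Lebesgue-point set and noting $T(x)\neq\tilde T(x)$ so the free point differs) and reading off that the output inequality coincides with the definition; the measure-theoretic input (full measure of good $x$, $R(\gamma)\subset\graph T$ a.e.) is routine from Lemma~\ref{gammaconcentrated} and $\mu\ll\Leb^d$. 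Concluding, $\mu$-a.e.\ $x\in A$ belongs to $\mathcal{R}(A)$, i.e.\ $\mu(A\setminus\mathcal{R}(A))=0$.
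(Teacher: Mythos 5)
Your final argument contains a genuine bookkeeping error, and ironically the version you abandoned was the correct one. In the definition of $\mathcal{R}(A)$ the point being tested for membership is the variable called $z$, while the variable called $x$ ranges over $A$ and plays the role of a \emph{witness}. So to show that your fixed point $x$ lies in $\mathcal{R}(A)$ with witness $x'$, the required inequality is
\[
\max\{c(x',T(x)),\,c(x,\tilde T(x'))\}<\max\{c(x',\tilde T(x')),\,c(x,T(x))\},
\]
and this is \emph{exactly} what your first application gives: Theorem \ref{beforemaintheorem} applied to $\tilde\gamma$ at the regular point $(x,\tilde T(x))$, with free point $y=T(x)\neq \tilde T(x)$ and Borel set $A$, produces $(x',\tilde T(x'))\in R(\tilde\gamma)$ with $x'\in A$ satisfying precisely this inequality. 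You mismatched the roles (identifying your tested point $x$ with the definition's witness variable $x$ rather than with $z$), concluded wrongly that the maps were ``reversed'', and then switched to applying the theorem to $\gamma$ at $(x,T(x))$ with free point $\tilde T(x)$. That second application yields a point $z\in A$ with
\[
\max\{c(x,T(z)),\,c(z,\tilde T(x))\}<\max\{c(x,\tilde T(x)),\,c(z,T(z))\},
\]
which certifies that the \emph{new} point $z$ belongs to $\mathcal{R}(A)$ with witness $x$ --- it says nothing about whether your fixed $x$ belongs to $\mathcal{R}(A)$. Hence your concluding sentence ``$\mu$-a.e.\ $x\in A$ belongs to $\mathcal{R}(A)$'' does not follow from the argument as finally written, and the proof as stated is incomplete.

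For comparison, the paper's proof is exactly your first (discarded) route: restrict to $\pi^1(R(\gamma))\cap\pi^1(R(\tilde\gamma))\cap\leb(A)$, which carries full $\mu$-measure in $A$, assume a point $x$ there lies outside $\mathcal{R}(A)$, apply Theorem \ref{beforemaintheorem} to $\tilde\gamma$ at $(x,\tilde T(x))$ with free point $T(x)$ and the Lebesgue-point set $A$, and read off that $x\in\mathcal{R}(A)$, a contradiction. One could in principle rescue the $\gamma$-based variant by taking $x$ to be a Lebesgue point of $A\setminus\mathcal{R}(A)$ and forcing the output point into $A\setminus\mathcal{R}(A)$, so that it would lie in $(A\setminus\mathcal{R}(A))\cap\mathcal{R}(A)=\emptyset$; but that requires knowing $\mathcal{R}(A)$ is measurable enough to speak of its Lebesgue points (it is defined by an existential quantifier over $A$), an issue you do not address and which the paper's route avoids by proving the relevant subset of $A\setminus\mathcal{R}(A)$ is empty.
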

\begin{rmk}
	The definition of $\mathcal{R}(A)$ is not symmetric with respect to the interchange of roles of $T$ and $\tilde T$, in the sense that 
	\begin{multline*}
		\mathcal{\tilde R} (A)=  \{ z \in \R^d \ : \ \exists \, x \in A \ \mbox{s.t.} \\
		\max \{c(x,\tilde T(z)), c(z,T(x))\} < \max \{c(x,T(x)), c(z,\tilde T(z)) \}\}.
	\end{multline*}
	is not necessarily equal to $\mathcal{R}(A)$. However, the same result can also be proven for $\mathcal{\tilde R}(A)$.
\end{rmk}
\begin{proof} First of all we observe that \[\begin{split}
		\mu(A\setminus\mathcal{R}(A))&=\mu\left(\pi^1(R(\gamma))\cap\pi^1(R(\tilde\gamma))\cap A \setminus \mathcal{R}(A)\right)\\
		&=\mu\left(\pi^1(R(\gamma))\cap\pi^1(R(\tilde\gamma))\cap \leb(A) \setminus \mathcal{R}(A)\right),\end{split} \]
	since $\gamma$ and $\tilde \gamma$ are respectively concentrated on $R(\gamma)$ and $R(\tilde\gamma)$ and $\mu<<\Leb^d$. We will prove that \[\pi^1(R(\gamma))\cap\pi^1(R(\tilde\gamma))\cap \leb(A)\setminus \mathcal{R}(A)=\emptyset.\] 
	Let us assume by contradiction that there exists $x\in\pi^1(R(\gamma))\cap\pi^1(R(\tilde\gamma))\cap \leb(A)\setminus \mathcal{R}(A)$ and consider $(x,T(x))\in R(\gamma)$ and $(x,\tilde T(x))\in R(\tilde\gamma)$. By definition of $A$, we have $T(x)\neq\tilde T(x)$.
	By the final sentence of Theorem \ref{beforemaintheorem} applied to $R(\tilde\gamma)$, we can find $(x',y')=(x',\tilde T(x'))\in R(\tilde \gamma)$ such that $x'\in A$ and
	\begin{equation}\label{ineq}
		\max\{c(x,\tilde T(x')),c(x',T(x))\}<\max\{c(x, T(x)), c(x',\tilde T(x'))\}.
	\end{equation}
	We conclude noticing that the inequality \eqref{ineq} implies that $x\in\mathcal{R}(A)$, contradicting the fact that $x\in A\setminus \mathcal{R}(A)$.
\end{proof}
\begin{thm}\label{uniqueness}
	Let $\mu$ and $\nu$ be two probability measures on $\R^{d}$, with compact support and  $\mu<<\Leb^{d}$. Assume that the function $c:\R^{d}\times \R^{d}\to \R^{+}$ satisfies the assumptions \eqref{costass1}-\eqref{costass4} of Theorem \ref{beforemaintheorem} and that $\nu(\{y_{0}\})>0$ for some $y_{0}\in\R^{d}$. Let $T$ and $\tilde T$ be optimal transport maps corresponding respectively to $\infty-c$-cyclically monotone transport plans $\gamma$ and $\tilde\gamma$. Then
	\begin{equation*}
		\mu\left(T^{-1}(\{y_{0}\})\setminus \tilde T^{-1}(\{y_{0}\})\right)=0.
	\end{equation*}
\end{thm}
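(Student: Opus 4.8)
The plan is to argue by contradiction and feed everything into Lemma~\ref{doublepoints}. Set
\[
A \eqdef T^{-1}(\{y_{0}\})\setminus \tilde T^{-1}(\{y_{0}\}),
\]
and suppose $\mu(A)>0$. For every $x\in A$ one has $T(x)=y_{0}\neq \tilde T(x)$, so $A$ is admissible in Lemma~\ref{doublepoints}. Moreover, since $T_{\sharp}\mu=\tilde T_{\sharp}\mu=\nu$ and $\nu(\{y_{0}\})>0$, conservation of mass at the atom gives $\mu\big(T^{-1}(\{y_{0}\})\big)=\mu\big(\tilde T^{-1}(\{y_{0}\})\big)=\nu(\{y_{0}\})$, so the mirror set $B\eqdef \tilde T^{-1}(\{y_{0}\})\setminus T^{-1}(\{y_{0}\})$ has $\mu(B)=\mu(A)>0$; in particular $\tilde\gamma$ routes a positive amount of mass to $y_{0}$ through points of $B$, and $\supp\tilde\gamma$ contains points of the form $(w,y_{0})$ with $w\in B$.

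First I would apply Lemma~\ref{doublepoints} to $A$. It yields $\mu\big(A\setminus\mathcal{R}(A)\big)=0$, hence for $\mu$-a.e.\ $z\in A$ there is $x\in A$ with
\[
\max\{c(x,T(z)),\,c(z,\tilde T(x))\}<\max\{c(x,\tilde T(x)),\,c(z,T(z))\}.
\]
Since $x,z\in A\subseteq T^{-1}(\{y_{0}\})$ we have $T(x)=T(z)=y_{0}$, so this reads
\[
\max\{c(x,y_{0}),\,c(z,\tilde T(x))\}<\max\{c(x,\tilde T(x)),\,c(z,y_{0})\}. \qquad (\dagger)
\]
The reason for working on $A$ is precisely that all the $\gamma$-images occurring in $(\dagger)$ collapse to $y_{0}$, a value that $\tilde T$ also attains, on the positive-mass set $B$.

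Next I would convert $(\dagger)$ into a contradiction with the $\infty$-$c$-cyclical monotonicity of $\tilde\gamma$. Choosing the points $z,x$ among $\gamma$- and $\tilde\gamma$-regular points — so that $(x,\tilde T(x)),(z,\tilde T(z))\in\supp\tilde\gamma$ and $(z,y_{0})\in\supp\gamma$ — and picking $w\in B$ with $(w,y_{0})\in\supp\tilde\gamma$, one exhibits a finite subset of $\supp\tilde\gamma$ (built from $(x,\tilde T(x))$, a point of $\supp\tilde\gamma$ landing on $y_{0}$, and possibly $(z,\tilde T(z))$) together with a permutation of the destinations whose maximal cost is strictly below $\max_{i}c(a_{i},\tilde T(a_{i}))$ over the chosen points, contradicting that $\tilde\gamma$ is ICM. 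The strict gain comes from $(\dagger)$; the atom is what lets one realise the destination $y_{0}$ inside $\supp\tilde\gamma$ and thereby close up an admissible permutation. If it is more convenient, one runs the argument against $\gamma$ using instead the $\tilde{\mathcal{R}}$-version of Lemma~\ref{doublepoints} (the Remark after that lemma) applied to $B$, and/or one iterates over the super-level sets $\{z\in A:\ c(z,y_{0})>\ell-1/n\}$ with $\ell=\esup_{\mu,\,A}c(\cdot,y_{0})$, where $(\dagger)$ is sharpest.

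I expect the main obstacle to be exactly this last step. Lemma~\ref{doublepoints} produces an improvement for a configuration that \emph{mixes} the two maps — it swaps the $\tilde\gamma$-image $\tilde T(x)$ of one point against the $\gamma$-image ($=y_{0}$) of another — whereas ICM is intrinsic to a single plan, and a naive global competitor fails because it does not preserve the marginal $\nu$ (one is led to the false identity $\tilde T_{\sharp}(\mu|_{A})=T_{\sharp}(\mu|_{B})$). This is where the atom is indispensable: because $y_{0}$ is an atom of $\nu$, the map $\tilde T$ also sends a positive (indeed equal) amount of mass to $y_{0}$, supplying the $\tilde\gamma$-points $(w,y_{0})$ needed to make the cyclic rearrangement admissible. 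The rest is measure-theoretic bookkeeping — restricting $A$ to its $\mu$-Lebesgue points and to $\gamma$- and $\tilde\gamma$-regular points via Lemma~\ref{gammaconcentrated} and Remark~\ref{fullmeasure}, and keeping track of essential suprema of $c(\cdot,y_{0})$ — so that all points entering the rearrangement genuinely lie in the supports of the respective plans. Concluding $\mu(A)=0$ gives the asserted $\mu\big(T^{-1}(\{y_{0}\})\setminus\tilde T^{-1}(\{y_{0}\})\big)=0$.
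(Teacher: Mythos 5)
Your setup (argue by contradiction, restrict to regular and Lebesgue points, invoke Lemma~\ref{doublepoints}, and use the atom to guarantee that $\tilde T$ also sends positive mass to $y_{0}$) matches the paper's opening moves, but the proof has a genuine gap precisely at the step you yourself flag as ``the main obstacle'': you never actually construct the finite cyclic configuration that violates $\infty$-$c$-cyclical monotonicity. A single application of Lemma~\ref{doublepoints} to $A$ cannot do it. The inequality $(\dagger)$ compares $c(z,\tilde T(x))$ against $c(z,y_{0})=c(z,T(z))$, i.e.\ it mixes a $\tilde\gamma$-cost with a $\gamma$-cost; since $z\in A$ means $z\notin\tilde T^{-1}(\{y_{0}\})$, the term $c(z,y_{0})$ is \emph{not} of the form $c(a,\tilde T(a))$ for any point of $\supp\tilde\gamma$ at hand, so no permutation of $\tilde\gamma$-destinations among $(x,\tilde T(x))$, $(z,\tilde T(z))$ and $(w,y_{0})$ reproduces both sides of $(\dagger)$; the symmetric attempt against $\gamma$ fails for the same reason because $\tilde T(x)$ need not be a $T$-image of any controlled point. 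Merely ``picking $w\in B$ with $(w,y_{0})\in\supp\tilde\gamma$'' does not close the cycle: after swapping, someone must absorb the destination $\tilde T(x)$, which forces you to find a new point $z_{1}$ with $T(z_{1})=\tilde T(x)$, then a point absorbing $\tilde T(z_{1})$, and so on.

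This is exactly why the paper's proof is built around the recursion $A_{k}=T^{-1}(B_{k-1})$, $A_{k}'=\mathcal{R}(A_{k})$, $B_{k}=\tilde T(A_{k}')$, the sets $P_{1}=\bigcup_{k}A_{k}$ and $P_{2}=\bigcup_{k}B_{k}$, and the mass-balance identity $\mu(P_{1})=\nu(P_{2})$: the atom $\nu(\{y_{0}\})>0$ enters only through the claim that $y_{0}\in P_{2}$ (otherwise $\nu(P_{2})<\nu(B_{0}\cup P_{2})=\mu(P_{1})$, a contradiction), and it is this claim that guarantees the chain closes after \emph{finitely} many steps, producing two $k$-tuples $(x_{j})$ and $(x_{j}')$ with $T(x_{j})=\tilde T(x_{j-1}')$, $T(x_{1})=\tilde T(x_{k}')=y_{0}$, and the improvement \eqref{eq:option2} at every link. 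Even then the contradiction is not immediate: one must apply the ICM property of \emph{both} plans, to $\{x_{j}\}$ with one cyclic permutation and to $\{x_{j}'\}$ with the inverse one, and run a case analysis on where the maximum in \eqref{(1)} is attained. None of this appears in your proposal; the alternative devices you mention (the $\tilde{\mathcal{R}}$-version on $B$, super-level sets of $c(\cdot,y_{0})$) are not developed and do not substitute for the closing argument. To repair the proof you should import the full recursive construction and the claim $y_{0}\in P_{2}$ from the paper.
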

\begin{proof} Assume, by contradiction, that
	\begin{equation*}
		\mu\left(T^{-1}(\{y_{0}\})\setminus \tilde T^{-1}(\{y_{0}\})\right)>0.
	\end{equation*}
	We first restrict to the set of the full $\mu$-measure where both the graphs of $T$ and $\tilde T$ are  $\infty-c$-cyclically monotone (by Theorem \ref{maintheorem} we know that we may consider the set $\pi^1(R(\gamma))\cap \pi^1(R(\tilde\gamma))$).
	We may also assume, restricting our attention to the set of interest, that 
	\begin{equation}\label{differentimages}
		T(x)\ne \tilde T(x)~~~\text{ for all }x\in \pi^1(R(\gamma))\cap \pi^1(R(\tilde\gamma)).
	\end{equation}
	In this proof we will apply Lemma \ref{doublepoints} repeatedly to different subsets $A$ of $\supp\mu$, this is always possible by Assumption (\ref{differentimages}). 
	%We recall that the set $\mathcal{R}(A)$, as defined in the Lemma \ref{doublepoints}, is the set of all $x'\in \R^d$ for which there exists an $x\in A$ such that
	% \begin{equation}\label{wrongIM}
		%\max \{c(x, \tilde T(x')), c(x',T(x))\} < \max \{c(x,T(x)), c(x', \tilde T(x')). \}
		%\end{equation}

		We define $B_0=\{y_0\}$, $A_1=T^{-1}(B_0)$, $A_1'=\mathcal{R}(A_1)$, and $B_1=\tilde T(A_1')$. We continue recursively: assuming that for all $j\in\{1,\ldots,k-1\}$ the sets $A_j,A_j'$, and $B_j$ have already been defined we set
		\begin{align*}
			&A_k=T^{-1}(B_{k-1})\,,~~~A_k'={\mathcal{R}}(A_k)\,,~~~\text{and}\\
			&B_k=\tilde T(A_k').
		\end{align*}
		%It should be noted that because of the assumption $T(x)\ne T(x)$ for all $x\in \supp\mu$, we have for each $k\ge 1$ (setting $A:=A_k$ in Lemma \ref{doublepoints}) that $\mu(A_k\setminus \mathcal{R}(A_k))=0$. 
		We observe that by construction $\mu(A_{j})>0$ and, by Lemma \ref{doublepoints}, also $\mu(A_j')>0$ for every $j$. We continue by defining
		\[P_1=\bigcup_{k\ge 1}A_k~~~\text{and}~~~P_2=\bigcup_{k\ge 1} B_k.\]
		We prove the following intermediate claim:\\\\
		\underline{\textbf{Claim}} $y_0\in P_2$. \\
		\textit{Proof. }Let us first show that 
		\begin{equation}\label{eq:1}
			\mu(P_1)=\nu(P_2).
		\end{equation}
		We prove that $\nu(P_2)\le \mu(P_1)$. Right from the definitions of the sets $P_1$, $P_2$, $A_k$, and $B_k$ we see that
		\[T^{-1}(B_0\cup P_2)=P_1.\]
		Since $\nu=T_\sharp\mu$, we get
		\begin{equation}\label{eq:2}
			\nu(P_2)\le\nu(B_0\cup P_2)=\mu(T^{-1}(B_0\cup P_2))=\mu(P_1).
		\end{equation}
		In order to prove the opposite inequality we first observe that, by using Lemma \ref{doublepoints} with $A=A_k$, we have
		\begin{equation}\label{eq:extra}
			\mu(A_k\setminus\mathcal{R}(A_k))=0~~~\text{for all }k\ge 1;
		\end{equation}
		and we recall that in this construction $A_k'=\mathcal{R}(A_k)$. 
		Now 
		\begin{align*}
			\mu(P_1)&=\mu\left(\bigcup_{k\ge 1}A_k\right)\stackrel{a)} \le\mu\left(\bigcup_{k\ge 1}A_k'\right)\\
			&\le\mu\left(\bigcup_{k\ge 1}\tilde T ^{-1}(\tilde T(A_k'))\right)=\mu\left(\tilde T^{-1}\left(\bigcup_{k\ge 1}\tilde T(A_k')\right)\right)\\
			&=\nu\left(\bigcup_{k\ge 1}\tilde T(A_k')\right)=\nu\left(\bigcup_{k\ge 1}B_k\right)=\nu(P_2)
		\end{align*} 
		where inequality a) is due to Condition (\ref{eq:extra}). So we have $\mu(P_1)\le \nu(P_2)$, which completes the proof of Condition (\ref{eq:1}).
	\\If by contradiction $y_0\notin P_2$, since $\nu(\{y_0\})>0$ (remember that in our construction $\{y_0\}=B_0$), then 
		\begin{equation}\label{eq:3}
			\nu(P_2)<\nu(B_0\cup P_2)=\mu(P_1),
		\end{equation}
		where in the last equality we have used Condition (\ref{eq:2}). Inequality (\ref{eq:3}) contradicts (\ref{eq:1}) and the \textbf{Claim} is proven. \\
		By the inclusion $y_{0}\in P_{2}$ we now know that $y_{0}$ belongs to $B_{k}$ for some $k\ge 1$. Therefore there exist $x_{k}'\in A'_{k}$ (since $B_k=\tilde T(A_k')$) such that $y_{0}=\tilde T(x_{k}')$.  
		Next we choose $x_k\in A_k$ such that
		\[\max \{c(x_k,\tilde T(x_k')), c(x_k ',T(x_k))\} < \max \{c(x_k,T(x_k)), c(x_k ',\tilde T(x_k ')) \}.\]
		This is possible since $A'_k=\mathcal{R}(A_k)$.
		\\Since by construction $A_k=T^{-1}(B_{k-1})$ and $x_k\in A_k$, we have that $T(x_k)\in B_{k-1}=\tilde T(A_{k-1}')$, so there exists $x_{k-1}'\in A_{k-1}'$ such that $\tilde T(x_{k-1}')\in B_{k-1}$ and  $T(x_k)=\tilde T(x_{k-1}')$.  And so we continue, decreasing with the indices down until having defined $x_1'$ and $x_1$. Since $x_1\in A_1=T^{-1}(B_0)$ we have that $T(x_1)=y_0=\tilde T(x'_k)$. Thus, we have constructed two $k$-uples of points $\left(x_{j}\right)_{j=1}^{k}$ and  $\left(x'_{j}\right)_{j=1}^{k}$ such that for all $j$, $x_j\in A_j$ and $x_j'\in A_j'$.
		\[
		\begin{cases}
			T(x_{j})=\tilde T(x'_{j-1}) &\quad \mbox{for all} \ 2\le j\le k;\\
			T(x_{1})=\tilde T(x'_{k})=y_{0}.
		\end{cases}.
		\]
		and that for all $1\le j\le k$
		\begin{equation}\label{eq:option2}
			\max\{c(x_j',T(x_j)),c(x_j,\tilde T(x_j'))\}<\max\{c(x_j',\tilde T(x_j')),c(x_j,T(x_j))\}.
		\end{equation}
		We observe that by Condition \eqref{eq:option2}, $x_j\not=x_j'$.	
		We now apply to the set $\{x_j\}_{j=1}^k$ the cyclical permutation 
		\[\sigma(j)=\begin{cases}j+1&\text{ if }j\in\{1,\ldots,k-1\}\\
			1&\text{ if }j=k\end{cases}\,.\]
		We have 
		\begin{align}\label{(1)}
			\notag\max_{1\le j\le k} c(x_j,T(x_j))&\stackrel{a)}{\le}\max_{1\le j\le k} c(x_j,T(x_{\sigma(j)}))\stackrel{b)}{=}\max_{1\le j\le k}c(x_j,\tilde T(x_j'))\\
			&\stackrel{c)}{<}\max_{1\le j\le k}\max\{c(x_j,T(x_j)),c(x_j',\tilde T(x_j'))\}\end{align}
		where inequality a) follows from the ICM-condition given by the optimality of $T$. 
		Equality b) follows from the fact that by construction $T(x_{\sigma(j)})=\tilde T(x_j')$ for all $j\in\{1,\ldots, k\}$.
		Estimate c) follows from Condition (\ref{eq:option2}). Concerning the last term in \eqref{(1)}, we have two possibilities: either 
		\[\max_{1\le j\le k}\max\{c(x_j,T(x_j)),c(x_j',\tilde T(x_j'))\}=c(x_m,T(x_m))~~~\text{ for some }m\]
		or 
		\[\max_{1\le j\le k}\max\{c(x_j,T(x_j)),c(x_j',\tilde T(x_j'))\}=\max_{1\le j\le k}c(x_j',\tilde T(x_j'))\,.\]
		The first case leads immediately to a contradiction.  So we are left with the latter case. We apply to the set $\{x_j'\}_{j=1}^k$ the cyclical permutation 
		\[\tau(j)=\begin{cases}j-1&\text{ if }j\in\{2,\ldots, k\}\\
			k&\text{ if }j=1\end{cases}\,.\]
		We get 
		\begin{align}\label{(2)}
			\notag& \max_{1\le j\le k}\max\{c(x_j,T(x_j)),c(x_j',\tilde T(x_j'))\}=
			\max_{1\le j\le k}c(x_j',\tilde T(x_j'))\stackrel{a)}{\le}\max_{1\le j\le k}c(x_j',\tilde T(x_{\tau(j)}'))\\&\stackrel{b) }=\max_{1\le j\le k}c(x_j',T(x_j))
			\stackrel{c)}{<}\max_{1\le j\le k}\max\{c(x_j,T(x_j)),c(x_j',\tilde T(x_j'))\}\,.
		\end{align}
		Above, in a) we have used the ICM property of $\tilde T$, in b) the fact that, by construction,  $T(x_j)=\tilde T(x_{\tau(j)}')$ for all $j\in\{1,\ldots, k\}$, and in c) again Condition (\ref{eq:option2}). Estimate \eqref{(2)} is impossible, so we get a contradiction that concludes the proof.
	\end{proof}
	
	\section{Some examples}
	In this section we give some examples of cost functions that satisfy the assumptions of Theorem \ref{beforemaintheorem}. We also present a counterexample which shows that the assumption of the strict quasiconvexity of $c(\cdot, y)$ is necessary. 
	
	\begin{xpl}\label{jyhlacost}
		Let $c:\R^{d}\times\R^{d}\to\R^{+}$ be a cost function of the type  introduced in \cite{jylha2015}, defined by 
		\begin{equation*}
			c(x,y):=h(y-x),
		\end{equation*}
		where $h:\R^{d}\to\R^{+}$ is a continuous and strictly quasiconvex function. Here, we also assume that there exists a unique $p\in\R^d$ such that $h(p)=0$, that $h$ is differentiable, and that $\nabla h(v)\not=0$ for every $v\not=p$. This cost function satisfies the assumptions of Theorem \ref{beforemaintheorem} from which we infer that the IM transportation plans corresponding to $c$ are of the Monge type. The fact that $c$ fulfils the condition (iv) can be seen as a special case of the proof of the example below. 
	\end{xpl}

	\begin{xpl}
		Let $c:\R^{d}\times\R^{d}\to\R^{+}$ be defined by 
		\begin{equation*}
			c(x,y):=h(G(y)-F(x)),
		\end{equation*}
		where $h:\R^{d}\to\R^{+}$ is defined as in the Example \ref{jyhlacost}, $G:\R^{d}\to \R^{d}$ is an affine transformation of the form \begin{equation*}
			G(y)=Ay+b,
		\end{equation*} where $A\in M^{d\times d}$ is  an invertible matrix and $b\in\R^{d}$ and $F:\R^{d}\to \R^{d}$ has the following properties: \begin{enumerate}[(a)]
			\item  F is differentiable on $\R^{d}$, $F$ is invertible and $\nabla F (x)$ is an invertible matrix for every $x\in \R^d$;
			\item The pre-image $F^{-1}$ of a strictly convex set is a strictly convex set. 
		\end{enumerate}
		Note that affine transformations  $x\mapsto A'x+b'$, where $A'$ is invertible, satisfy Properties (a) and (b), so one class of examples is given by cost functions  of the form $c(x,y)=h(G(y)-F(x))$ where both $F$ and $G$ are affine transformations with invertible matrices.\\
		We observe that the cost function $c$ satisfies the assumptions of Theorem \ref{beforemaintheorem}. By the assumptions on $h$ and the invertibility of $F$ and $G$, Assumption (i) is satisfied. The strict quasiconvexity of $h$, the linearity of $G$ and the Property (b) of $F$ ensure the strict quasiconvexity of $c(\cdot,y)$  for every $y$, and, for the same reason, $c(x, \cdot)$ is strictly quasiconvex for every $x$. 
		Let us prove that the ``twist kind'' condition holds. Assume that there exist $y,\ytilde\in \R^{d}$ and $\lambda>0$ such that $c(x,y)=c(x,\tilde y)=\lambda$ and $n_{c(\cdot,y)}(x)=n_{c(\cdot,\tilde y)}(x)$. We first observe that, by the fact that 
		\[
		\nabla_{x}c(x,y)=-\nabla F(x)^T\cdot\nabla h(G(y)-F(x)),
		\] 
		$\nabla_x c(x,y)\not=0$ for every $x,y$ such that $G(y)-F(x)\not = p$ (that is, for every $x,y$ such that $c(x,y)\not = 0$). Thus, the equality between the two unit normals becomes \[\frac{\nabla_{x}c(x,y)}{|\nabla_{x}c(x,y)|}=\frac{\nabla_{x}c(x,\tilde{y})}{|\nabla_{x}c(x,\tilde{y})|}.\] Since
		\begin{equation*}
			\nabla_{y}c(x,y)=A^T\cdot\nabla h(G(y)-F(x)),
		\end{equation*}
		thanks to the invertibility of $A$ we have that 
		\begin{equation*}
			\nabla_{x}c(x,y)=-\nabla F(x)^T (A^T)^{-1}\nabla_{y}c(x,y).
		\end{equation*}
		The computation above, the assumptions of invertibility on $A$ and $\nabla F(x)$ and the condition on the normalized gradients imply
		\begin{equation*}
			\frac{\nabla_{y}c(x,y)}{|\nabla_{y}c(x,y)|}=\frac{\nabla_{y}c(x,\tilde{y})}{|\nabla_{y}c(x,\tilde{y})|}.
		\end{equation*}
		We observe that both $y$ and $\ytilde$ belong to the boundary of the sublevel set with respect to $x$, $C^{x}_{\lambda}:=\{z\in\R^{d}: c(x,z)\le \lambda\}$. Then $y$ and $\ytilde$ must coincide, since they are two points on the smooth boundary of a strictly convex set $C_\lambda^x$ with the same normal.
	\end{xpl}
	
	The following is an example of an ICM plan that is not given by a map for a cost function that doesn't satisfy the assumptions of Theorem \ref{beforemaintheorem}. 
	\begin{xpl}
		We consider the space $X=\R^2$ with the cost function given by the $\infty$-distance
		\[d((x_1,y_1),(x_2,y_2))=\max\{|x_{1}-y_{1}|,|x_{2}-y_{2}|\}\,.\]
		We denote by $Q:=[0,1]\times[0,1]$, the unit square centered at $\left(\frac12,\frac12\right)$. We take $\mu=\mathcal{L}^2|_{Q}$, the $2$-dimensional Lebesgue measure restricted to $Q$, and $\nu=\mathcal{L}^2|_{Q+(10,0)}$, the $2$-dimensional Lebesgue measure restricted to the translation of $Q$ by the vector $(10,0)$. 
		Let $\gamma\in \Pi(\mu,\nu)$ be defined by 
		\[\gamma=\mu\otimes\mathcal{H}^1|_{\{x_1+10\}\times [0,1]},  \] that is, the transport plan which distributes evenly every point $(x_1,x_2)\in \supp\mu$ to the vertical line segment $\{(x_1+10,y_2)~|~y_2\in \R\}\cap (Q+(10,0))\}$.  
		This plan is clearly not given by a map, but it is ICM.
		
		To prove that $\gamma$ is ICM we fix points 
		$\{((x^i_{1},x^i_2),(x^i_{1}+10,y^i_2))\}_{i=1}^{N}$ in the support of $\gamma$ and a permutation $\sigma$ of the set $\{1,\ldots, N\}$.
		Now 
		\[\max_{1\le i\le N}\{d((x^i_{1},x^i_2),((x^i_{1}+10,y^i_2)) \}=\max_{1\le i\le N}\left\{\max\{10,|x^i_2-y^i_2| \} \right\}=10\]
		where in the last equality we have used the fact that 
		\[|x_2^i-y_2^i|\le 1<10\,.\]
		Let us prove that 
		\[\max_{1\le i\le N}\{d((x^i_{1},x^{i}_2),(x^{\sigma(i)}_{1}+10,y^{\sigma(i)}_2)) \}\geq10\,.\]
		It is enough to show that there exists an index $k\in\{1,\ldots,N\}$ such that 
		\[|x_1^k-(x_1^{\sigma(k)}+10)|\geq 10.\]
		We may assume that $\sigma$ is not the identity: for the identity permutation the ICM condition holds as an equality. 
		Let us denote $I=\{1,\ldots,N\}$ and let $k_0$ be such that 
		\[x_1^{k_0}=\min\{x_1^k~|~k\in I\}.\]
		The point $x_1^{k_0}$ satisfies $x_{1}^{k_0}\le x_1^{\sigma(k_0)}$. Therefore, 
		\[|x_1^{k_0}-(x_1^{\sigma(k_0)}+10)|=10+(x_1^{\sigma(k_0)}-x_1^{k_0})\geq 10\]
		and we are done. 
	\end{xpl}

\section*{Acknowledgements}
The research of the first two authors is partially financed  by the {\it ``Fondi di ricerca di ateneo, ex 60 $\%$''}  of the  University of Firenze and is part of the project  {\it "Alcuni problemi di trasporto ottimo ed applicazioni"}  of the  GNAMPA-INDAM.

The research of the third author is part of the project \emph{Contemporary topics on multi-marginal optimal mass transportation}, funded by the Finnish Postdoctoral Pool (Suomen Kulttuuris\"a\"ati\"o), and of the project \emph{Giovani ricercatori protagonisti 2020}, funded by Fondazione Cassa di Risparmio di Firenze. 

The authors wish to thank the anonymous referee for the help in finding early flows and weakness in the paper and allowing for corrections. 

\bibliographystyle{plain}

\end{document}